\crefname{hypothesis}{Hypothesis}{Hypotheses}
\title{Riemannian Geometry of Symmetric Positive Definite Matrices via Cholesky
Decomposition\thanks{Submitted to the editors on  \today.}}
\author{Zhenhua Lin\thanks{National University of Singapore 
  (\email{stalz@nus.edu.sg}, \url{https://blog.nus.edu.sg/zhenhua/}).}
}
\theoremstyle{plain}
\newtheorem{thm}{Theorem}
  \theoremstyle{definition}
  \newtheorem{example}[thm]{Example}
  \theoremstyle{plain}
  \newtheorem{prop}[thm]{Proposition}
  \theoremstyle{plain}
  \newtheorem{lem}[thm]{Lemma}
  \theoremstyle{plain}
  \newtheorem{cor}[thm]{Corollary}
\global\long\def\expect{\mathbf{\mathbb{E}}}
\global\long\def\real{\mathbb{R}}
\global\long\def\manifold{\mathcal{M}}
\newcommandx\manball[1][usedefault, addprefix=\global, 1=\delta]{\mathbb{B}_{#1}^{\manifold}(x)}
\global\long\def\diffop{\mathrm{d}}
\global\long\def\cholspace{\mathcal{L}_{+}}
\global\long\def\lowtri{\mathcal{L}}
\global\long\def\transpose{\top}
\global\long\def\ch#1{\mathscr{L}(#1)}
\global\long\def\lp#1{\lfloor#1\rfloor}
\global\long\def\dg#1{\mathbb{D}(#1)}
\global\long\def\dgmap{\mathbb{D}}
\global\long\def\differential#1#2{D_{#2}#1}
\global\long\def\trace{\mathrm{tr}}
\global\long\def\mygroupop{\varocircle}
\global\long\def\spdgpop{\varoast}
\global\long\def\chmap{\mathscr{L}}
\global\long\def\chtospd{\mathscr{S}}
\global\long\def\Exp{\mathrm{Exp}}
\newcommandx\spd[1][usedefault, addprefix=\global, 1=m]{\mathcal{S}_{#1}^{+}}
\newcommandx\tangentspace[2][usedefault, addprefix=\global, 1=\manifold]{T_{#2}#1}
\newcommandx\sym[1][usedefault, addprefix=\global, 1=m]{\mathcal{S}_{#1}}
\global\long\def\define{:=}
\newcommandx\fronorm[2][usedefault, addprefix=\global, 1=]{\|#2\|_{\mathrm{F}}^{#1}}
\global\long\def\innerprod#1#2{\langle#1,#2\rangle}
\global\long\def\Log{\mathrm{Log}}
\newcommandx\froinnerprod[3][usedefault, addprefix=\global, 1=]{\innerprod{#2}{#3}_{\mathrm{F}}^{#1}}
\global\long\def\gspd{g}
\global\long\def\gchol{\tilde{g}}
\global\long\def\half{\frac{1}{2}}
\global\long\def\geospd{\gamma}
\global\long\def\geochol{\tilde{\gamma}}
\global\long\def\respd{\Exp}
\global\long\def\rechol{\widetilde{\Exp}}
\global\long\def\logchol{\widetilde{\mathrm{Log}}}
\newcommand{\linremark}[1]{{#1}}
\begin{document}

\maketitle

\begin{abstract}
We present a new Riemannian metric, termed Log-Cholesky metric, 
on the manifold of symmetric positive definite
(SPD) matrices via Cholesky decomposition. We first construct a Lie
group structure and a bi-invariant metric on Cholesky space, the collection
of lower triangular matrices whose diagonal elements are all positive.
Such group structure and metric are then pushed forward to the space
of SPD matrices via the inverse of Cholesky decomposition that is
a bijective map between Cholesky space and SPD matrix space. This new Riemannian metric and Lie group structure fully circumvent swelling
effect, in the sense that the determinant of the Fr\'echet average of a set of SPD matrices under
the presented metric, called Log-Cholesky average, is between the minimum and the maximum of the determinants
of the original SPD matrices. Comparing to existing metrics such as the affine-invariant
metric and Log-Euclidean metric, the presented metric is simpler,
more computationally efficient and numerically stabler. In particular,  parallel transport along geodesics under Log-Cholesky metric is given in a closed and easy-to-compute form.
\end{abstract}

\begin{keywords}
  Fr\'echet mean, symmetric positive definite matrix, Lie group, bi-invariant
metric, parallel transport, Cholesky decomposition, lower triangular matrix.
\end{keywords}

\begin{AMS}
47A64, 26E60, 53C35, 22E99, 32F45, 53C22, 15A22.
\end{AMS}

\section{Introduction}

Symmetric positive definite (SPD) matrices emerge in vast scientific
applications such as computer vision \cite{Caseiro2012,Rathi2007},
elasticity \cite{Guilleminot2012,Moakher2006}, signal processing
\cite{Arnaudon2013,Hua2017}, medical imaging \cite{Dryden2009,Fillard2007,Fletcher2007,Lenglet2006,Wang2004}
and neuroscience \cite{Friston2011}. A concrete example is analysis
of functional connectivity between brain regions. Such connectivity
is often characterized by the covariance of blood-oxygen-level dependent
signals \cite{Huettel2008} generated by brain activities from different
regions. The covariance is mathematically defined by a covariance
matrix which is an SPD matrix. Another application is diffusion tensor
imaging \cite{LeBihan1991}, which is extensively used to obtain
high-resolution information of internal structures of certain tissues
or organs, such as hearts and brains. For each tissue voxel, there is
a $3\times3$ SPD matrix to describe the shape of local diffusion.
Such information has clinical applications; for example, it can be used
to discover pathological area surrounded by healthy tissues.

The space of SPD matrices of a fixed dimension $m$, denoted by $\spd$
in this article, is a convex smooth submanifold of the Euclidean space
$\real^{m(m+1)/2}$. The inherited Euclidean metric further turns
$\spd$ into a Riemannian manifold. However, as pointed out in \cite{Arsigny2007},
this classic metric is not adequate in many applications for two reasons.
First, the distance between SPD matrices and symmetric matrices with
zero or negative eigenvalues is finite, which implies that, in the
context of diffusion tensor imaging, small diffusion is more likely
than large diffusion. Second, the Euclidean average of SPD matrices
suffers from swelling effect, i.e., the determinant of the average
is larger than any of the original determinants. When SPD matrices
are covariance matrices, as in the application of diffusion tensor
imaging,  determinants correspond to overall dispersion of 
diffusion. Inflated determinants amount to extra diffusion that is 
artificially introduced in computation. 

To circumvent the problems of the Euclidean metric for SPD matrices,
various metrics have been introduced in the literature, such as the
affine-invariant metric \cite{Moakher2005,Pennec2006a} and the Log-Euclidean
metric \cite{Arsigny2007}. These metrics keep symmetric matrices
with some nonpositive eigenvalues at an infinite distance away from
SPD matrices, and are not subject to swelling effect. In addition,
the Log-Euclidean framework features a closed form of the Fr\'echet average
of SPD matrices. It also turns $\spd$ into a Lie group endowed with
a bi-invariant metric. However, computation of Riemannian exponential
and logarithmic maps requires evaluating a series of an infinite number
of terms; see Eq. (2.1) and (3.4) in \cite{Arsigny2007}. Comparing
to the Log-Euclidean metric, the affine-invariant one not only possesses
easy-to-compute exponential and logarithmic maps, but also enjoys
a closed form for parallel transport along geodesics; see Lemma 3
of \cite{Schiratti2017}. However, to the best of our knowledge, no closed  form is found for the Fr\'echet
average of SPD matrices under the affine-invariant metric. \linremark{The Fr\'echet average of SPD matrices is also studied in the literature for distance functions or Riemannian metrics arising from perspectives other than swelling effect, such as the Bures-Wasserstein   metric that is related to the theory of optimal transport \cite{Bhatia2018}, and the S-divergence studied in both \cite{Chebbi2012} and \cite{Sra2016}. Other related works include Riemannian geometry for positive semidefinite matrices \cite{Vandereycken2013,Massart2018} and  Riemannian structure for correlation matrices \cite{Grubisic2007}.}

In addition to the above Riemannian frameworks, it is also common
to approach SPD matrices via Cholesky decomposition in practice for efficient computation, such
as \cite{Eubank2002,Osborne2013,Wang2004}. Distance on SPD matrices
based on Cholesky decomposition has also been explored in the literature.
For example, in \cite{Dryden2009} the distance between two SPD
matrices $P_{1}$ and $P_{2}$ with Cholesky decomposition $P_1=L_1L_1^\transpose$ and $P_2=L_2L_2^\transpose$ is defined by $\fronorm{L_1-L_2}$, where each of $L_1$ and $L_2$ is
a lower triangular matrix whose diagonal elements are positive, 
and $\fronorm{\cdot}$ denotes Frobenius matrix norm. Although this distance is simple and easy to
compute, it suffers from swelling effect, as demonstrated by the
following example.
\begin{example}One first notes that, under the Cholesky distance, the geodesic interpolation between $P_{1}$
and $P_{2}$ is given by $P_{\rho}\define\{\rho L_{1}+(1-\rho)L_{2}\}\{\rho L_{1}+(1-\rho)L_{2}\}^{\transpose}$
for $\rho\in[0,1]$. 
For any $\epsilon>0$, consider matrices 
\[
P_{1}=\begin{pmatrix}\epsilon^{2} & 0\\
0 & 1
\end{pmatrix},\qquad P_{2}=\begin{pmatrix}1 & 0\\
0 & \epsilon^{2}
\end{pmatrix},\qquad L_{1}=\begin{pmatrix}\epsilon & 0\\
0 & 1
\end{pmatrix},\qquad L_{2}=\begin{pmatrix}1 & 0\\
0 & \epsilon
\end{pmatrix}.
\]
It is clear that $L_{1}L_{1}^{\transpose}=P_{1}$ and $L_{2}L_{2}^{\transpose}=P_{2}$.
When $\rho=1/2$, 
\[
P_{\rho}=\frac{1}{4}(L_{1}+L_{2})(L_{1}+L_{2})^{\transpose}=\begin{pmatrix}\frac{(1+\epsilon)^{2}}{4} & 0\\
0 & \frac{(1+\epsilon)^{2}}{4}
\end{pmatrix},
\]
whose determinant is $\det(P_{\rho})=(1+\epsilon)^{4}/16$. However,
$\det(P_{1})=\det(P_{2})=\epsilon^{2}<(1+\epsilon)^{4}/16$, or equivalently,
$\max\{\det(P_{1}),\det(P_{2})\}<\det(P_{\rho})$, whenever $\epsilon\neq1$. 
\end{example}

In this work, we propose a new Riemannian metric on SPD matrices via
Cholesky decomposition. \linremark{The basic idea is to introduce a new metric for the space of lower triangular matrices with positive diagonal elements and then push it forward to the space of SPD matrices via Cholesky decomposition.} The metric, termed \emph{Log-Cholesky metric}, has
the advantages of the aforementioned affine-invariant metric, Log-Euclidean
metric and Cholesky distance. First, it is as simple as the Cholesky
distance, but not subject to swelling effect. Second, like the Log-Euclidean
metric, the presented metric enjoys Lie group bi-invariance, as well
as a closed form for the Log-Cholesky average of SPD matrices. \linremark{This bi-invariant Lie group structure seems not shared by the aforementioned works other than \cite{Arsigny2007} in the literature.} Third, it
features simple and easy-to-compute expressions for Riemannian exponential
and logarithmic maps, in contrast with the Log-Euclidean metric. Finally,
like the affine-invariant metric, the expression for parallel transport
along geodesics is simple and easy-to-compute under the presented metric. Parallel
transport is important in applications like regression methods on
Riemannian manifolds, such as \cite{Schiratti2017,Yuan2012,Zeestraten2017}. 

\linremark{It is noted that Cholesky decomposition is also explored in \cite{Grubisic2007} for a Riemannian geometry of correlation matrices with  rank no larger than a fixed bound. Despite certain similarity in the use of Cholesky decomposition, this work is fundamentally different from ours. First, it studies correlation matrices rather than SPD matrices. For a correlation matrix, its diagonal elements are restricted to be one. Second, the Riemannian structures considered in \cite{Grubisic2007} and our work are different. For example, the so-called Cholesky manifold in \cite{Grubisic2007} is a Riemannian submanifold of a Euclidean space, while our Riemannian manifold to be proposed is not. Finally, Cholesky decomposition is utilized in \cite{Grubisic2007} as a way to parameterize correlation matrices, rather than push forward a new manifold structure to correlation matrices.}

We structure the rest of this article as follows. Some notations and basic properties of lower triangular and SPD matrices are collected in \cref{sec:Cholesky-decomposition}. In \cref{sec:Lie-group-structure},
we introduce a new Lie group structure on SPD matrices and define the Log-Cholesky
metric on the group. Basic features such as Riemannian exponential/logarithmic
maps, geodesics and parallel transport are also characterized. \Cref{sec:Mean-of-distributions} is devoted to the Log-Cholesky mean/average 
of distributions on SPD matrices. 
We then conclude the article in \cref{sec:Conclusion}.

\section{Lower triangular matrices and SPD matrices\label{sec:Cholesky-decomposition}}
We start with introducing some notations and recalling some basic properties of lower triangular and SPD matrices. Cholesky decomposition is then shown to be a diffeomorphism between lower triangular matrix manifolds and SPD manifolds. This result serves as a cornerstone of our development: it enables us to push forward a Riemannian metric defined on the space of triangular matrices to the space of SPD matrices.

\subsection{Notations and basic properties}\label{subsec:lower}
Throughout this paper, $m$ is a fixed positive integer that represents the dimension of matrices under consideration. 
For a matrix $A$, we use $A_{ij}$ or $A(i,j)$ to denote its element on the
$i$th row and $j$th column. The notation $\lp A$ denotes an $m\times m$
matrix whose $(i,j)$ element is $A_{ij}$ if $i>j$ and is zero otherwise,
while $\dg A$ denotes an $m\times m$ diagonal matrix whose $(i,i)$ element is
$A_{ii}$. In other words, $\lp A$
is the strictly lower triangular part, while $\dg A$
is the diagonal part of $A$. The
trace of a matrix $A$ is denoted by $\trace(A)$, and the determinant is denoted by $\det(A)$. For two
square matrices $A$ and $B$, $\froinnerprod AB\define\sum_{ij}A_{ij}B_{ij}$
denotes the Frobenius inner product between them, and the induced
norm is denoted by $\fronorm A\define\froinnerprod[1/2]AA$.

The matrix
exponential map of a real matrix is defined by $\exp(A)=\sum_{k=0}^{\infty}A^{k}/k!$,
and its inverse, the matrix logarithm, whenever it exists and is real, is denoted
by $\log(A)$. It is noted that the exponential of a lower triangular matrix is also lower triangular. In addition, the matrix exponential of a diagonal matrix can be obtained by applying the exponential function to each diagonal element. The matrix logarithm of a diagonal matrix with positive diagonal elements can be computed in a similar way. Thus, the matrix exponential/logarithmic map of a diagonal matrix is diagonal.

The space of $m\times m$ lower triangular matrices is
denoted by $\lowtri$, and the subset of $\lowtri$ whose diagonal elements are all positive is denoted by $\cholspace$. It is straightforward to check the following properties of lower triangular matrices.
\begin{itemize}
\item $X=\lp X+\dg X$ for $X\in\lowtri$.
\item $X_1+X_2\in \lowtri$ and $X_1X_2\in \lowtri$ for $X_1,X_2\in\lowtri$.
\item $L_1+L_2\in \cholspace$ and $L_1L_2\in\cholspace$ if $L_1,L_2\in\cholspace$.
\item If $L\in\cholspace$, then the inverse $L^{-1}$ exists and belongs to $\cholspace$.
\item For $X_1,X_2\in\lowtri$, $\dg{X_1+X_2}=\dg{X_1}+\dg{X_2}$ and 
$\dg{X_1X_2}=\dg{X_1}\dg{X_2}$.
\item $\dg{L^{-1}}=\dg{L}^{-1}$ for $L\in \cholspace$.
\item $\det(X)=\prod_{j=1}^m X_{jj}$ for $X\in \lowtri$.
\end{itemize}
These properties show that both $\lowtri$ and $\cholspace$ are closed under matrix addition and multiplication, and that the operator $\dgmap$ interacts well with these operations.

Recall that $\spd$ is defined as the collection of $m\times m$ SPD matrices. We denote the space of $m\times m$ symmetric space 
by $\sym$. Symmetric matrices and SPD matrices possess numerous algebraic and analytic properties that are well documented in \cite{Bhatia2007}. Below are some of them to be used in the sequel.
\begin{itemize}
\item All eigenvalues $\lambda_1,\ldots,\lambda_m$ of an SPD $P$ are positive, and $\det(P)=\prod_{j=1}^m\lambda_j$. Therefore, the determinant of an SPD matrix is positive.
\item For any invertible matrix $X$, the matrix $XX^\transpose$ is an SPD matrix.
\item $\exp(S)$ is an SPD matrix for a symmetric matrix $S$, while $\log(P)$ is a symmetric metric for an SPD matrix $P$.
\item Diagonal elements of an SPD matrix are all positive. This can be seen from the fact that $P_{jj}=e_j^\transpose P e_j>0$ for $P\in\spd$, where $e_j$ is the unit vector with 1 at the $j$th coordinate and 0 elsewhere.
\end{itemize}

\subsection{Cholesky Decomposition}
\label{subsec:cholesky-decomposition}
Cholesky decomposition, named after Andr\'e-Louis
Cholesky, represents a real $m\times m$ SPD
matrix $P$ as a product of a lower triangular matrix $L$ and
its transpose, i.e., $P=LL^{\transpose}$. If the diagonal elements of
$L$ are restricted to be positive, then the decomposition is unique
according to Theorem 4.2.5 of \cite{Golub1996}. Such lower
triangular matrix, denoted by $\ch P$, is called the \emph{Cholesky factor}
of $P$. Since in addition $L=\ch{LL^{\transpose}}$ for each $L\in\cholspace$,
the map $\chmap:\spd\rightarrow\cholspace$ is bijective. In other
words, there is one-to-one correspondence between SPD matrices and
lower triangular matrices whose diagonal elements are all positive.

The space $\cholspace$, called the \emph{Cholesky space} in this paper,
is a smooth submanifold of $\lowtri$ that is identified with the
Euclidean space $\real^{m(m+1)/2}$. Similarly, the space $\spd$
of SPD matrices is a smooth submanifold of
the space $\sym$ of symmetric matrices identified with vectors in
$\real^{m(m+1)/2}$. As a manifold map between smooth manifolds $\cholspace$ and $\spd$, the map $\chmap$ is indeed a diffeomorphism. This fact will be explored to endow $\spd$ with a new Riemannian metric that to be presented in \cref{subsec:spd-metric}.
\begin{prop}
The Cholesky map $\chmap$ is a diffeomorphism between smooth manifolds
$\cholspace$ and $\spd$.
\end{prop}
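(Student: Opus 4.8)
The plan is to establish the claim through the inverse correspondence $\chtospd \define \chmap^{-1} : \cholspace \to \spd$, $\chtospd(L) = LL^\transpose$, rather than through $\chmap$ directly, since $\chtospd$ has a manifestly smooth (indeed polynomial) expression in the entries of $L$ and lands in $\spd$ precisely because every $L \in \cholspace$ is invertible. Because $\cholspace$ and $\spd$ are open subsets of the vector spaces $\lowtri$ and $\sym$, their tangent spaces at each point are canonically identified with $\lowtri$ and $\sym$, both of dimension $m(m+1)/2$. I would then show that $\differential{\chtospd}{L}$ is a linear isomorphism for every $L$; the inverse function theorem would render $\chtospd$ a local diffeomorphism everywhere, and since it is already known to be a bijection, it would be a global diffeomorphism, whence $\chmap = \chtospd^{-1}$ is smooth and the proposition follows.

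First I would compute the differential: for $X \in \tangentspace[\cholspace]{L} = \lowtri$, differentiating $L \mapsto LL^\transpose$ yields
\[
\differential{\chtospd}{L}(X) = XL^\transpose + LX^\transpose,
\]
which indeed lies in $\sym$, as required of an element of $\tangentspace[\spd]{LL^\transpose}$. Since the domain and codomain share the same finite dimension $m(m+1)/2$, it suffices to verify that this linear map is injective.

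The main obstacle is exactly this injectivity, where one must convert the symmetry constraint into a usable triangularity constraint. I would do so via the substitution $Y = L^{-1}X$: using invertibility of $L$, the equation $XL^\transpose + LX^\transpose = 0$ becomes $L(Y + Y^\transpose)L^\transpose = 0$, hence $Y + Y^\transpose = 0$, so that $Y$ is skew-symmetric. On the other hand, $Y = L^{-1}X$ is a product of lower triangular matrices (recall that $L^{-1} \in \cholspace$ whenever $L \in \cholspace$), and is therefore itself lower triangular. A lower triangular skew-symmetric matrix must vanish, so $Y = 0$ and consequently $X = LY = 0$. This establishes injectivity, hence the isomorphism property that drives the whole argument; the change of variables $Y = L^{-1}X$ is the step that cleanly separates the triangular structure from the symmetry and is, to my mind, the only genuinely nonroutine part of the proof.
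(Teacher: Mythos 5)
Your proof is correct, but it follows a genuinely different route from the paper's. The paper proves smoothness of the \emph{forward} map $\chmap$ directly: it writes out the recursive formulas $L_{ii}=\sqrt{P_{ii}-\sum_{k<i}L_{ik}^{2}}$, $L_{ij}=L_{jj}^{-1}(P_{ij}-\sum_{k<j}L_{ik}L_{jk})$, and runs an induction over the entries to show each $L_{ij}$ and each $1/L_{jj}$ is a smooth function of $P$; smoothness of $\chtospd(L)=LL^{\transpose}$ is then immediate, giving the diffeomorphism. You instead apply the inverse function theorem to $\chtospd$: you use that $\cholspace$ and $\spd$ are open in $\lowtri$ and $\sym$ (same dimension $m(m+1)/2$), compute $\differential{\chtospd}L(X)=XL^{\transpose}+LX^{\transpose}$, and prove injectivity via the substitution $Y=L^{-1}X$, which turns the symmetry constraint into the observation that a lower triangular skew-symmetric matrix vanishes. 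Both arguments rest on the same prior fact (bijectivity of $\chmap$, from existence and uniqueness of the Cholesky factorization), and both are complete. What the paper's induction buys is an elementary, self-contained argument with explicit formulas and no appeal to the inverse function theorem. What your route buys is brevity and reuse: the differential you compute, and the mechanism behind its injectivity, are exactly what the paper develops separately in its later proposition on $\differential{\chtospd}L$ and its inverse $W\mapsto L(L^{-1}WL^{-\transpose})_{\half}$ (needed to define the pushforward metric), so your argument effectively front-loads work the paper must do anyway, at the cost of invoking a bigger theorem.
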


\begin{proof}
We have argued that $\chmap$ is a bijection. To see that it is also
smooth, for $P=LL^{\transpose}$ with $L\in\cholspace$, we write
\begin{align*}
& \begin{pmatrix}P_{11} & P_{12} & \cdots & P_{1m}\\
P_{21} & P_{22} & \cdots & P_{2m}\\
\vdots & \vdots & \ddots & \vdots\\
P_{m1} & P_{m2} & \cdots & P_{mm}
\end{pmatrix}  =\begin{pmatrix}L_{11} & 0 & \cdots & 0\\
L_{21} & L_{22} & \cdots & 0\\
\vdots & \vdots & \ddots & \vdots\\
L_{m1} & L_{m2} & \cdots & L_{mm}
\end{pmatrix}\begin{pmatrix}L_{11} & L_{21} & \cdots & L_{m1}\\
0 & L_{22} & \cdots & L_{m2}\\
\vdots & \vdots & \ddots & \vdots\\
0 & 0 & \cdots & L_{mm}
\end{pmatrix}\\
 & =\begin{pmatrix}L_{11}^{2} & L_{21}L_{11} & \cdots & L_{m1}L_{11}\\
L_{21}L_{11} & L_{21}^{2}+L_{22}^{2} & \cdots & L_{m1}L_{21}+L_{m2}L_{22}\\
\vdots & \vdots & \ddots & \vdots\\
L_{m1}L_{11} & L_{m1}L_{21}+L_{m2}L_{22} & \cdots & \sum_{k=1}^{m}L_{mk}^{2}
\end{pmatrix},
\end{align*}
from which we deduce that 
\begin{equation}
\begin{cases}
L_{ii}=\sqrt{P_{ii}-\sum_{k=1}^{i-1}L_{ik}^{2}},\\
L_{ij}=\frac{1}{L_{jj}}\left(P_{ij}-\sum_{k=1}^{j-1}L_{ik}L_{jk}\right) & \text{for }i>j.
\end{cases}\label{eq:pf-chol-decomp}
\end{equation}
The existence of a unique Cholesky factor for every SPD matrix suggests
that $P_{ii}-\sum_{k=1}^{i-1}L_{ik}^{2}>0$ for all $i$. Thus, $L_{11}=\sqrt{P_{11}}$,
as well as its reciprocal $1/L_{11}$, is smooth. 

Now assume $L_{ij}$ and $1/L_{jj}$
are smooth for $i=1,\ldots,i_{0}$ and $j=1,\ldots,j_{0}\leq i_{0}$.
As we just showed, this hypothesis is true for $i_{0}=1$ and $j_{0}=1$.
If $j_{0}=i_{0}$, from \cref{eq:pf-chol-decomp} we see that $L_{i_{0}+1,1}=(1/L_{11})P_{i_{0}+1,1}$
is smooth. If $j_{0}<i_{0}-1$, then $L_{i_{0},j_{0}+1}$ results
from a sequence of elementary operations, such as multiplication,
addition and subtraction, of maps $L_{i_{0}1},\ldots,L_{i_{0},j_{0}}$,
$L_{j_{0}+1,1},\ldots,L_{j_{0}+1,j_{0}}$ and $1/L_{j_{0}+1,j_{0}+1}$
that are all smooth according to the induction hypothesis. As these elementary
operations are all smooth, $L_{i_{0},j_{0}+1}$ is also smooth. If
$j_{0}=i_{0}-1$, then $L_{i_{0},j_{0}+1}=L_{i_{0},i_{0}}$, as well
as $1/L_{i_{0},i_{0}}$, is smooth via similar reasoning based on
the additional fact that $P_{i_{0},i_{0}}-\sum_{k=1}^{i_{0}-1}L_{ik}^{2}>0$
and the square-root operator $\sqrt{}$ is smooth on the set of positive real
numbers. The above derivation then shows that the induction hypothesis
is also true for $i=i_{0},j=j_{0}+1$ if $j_{0}<i_{0}$ and $i=i_{0}+1,j=1$
if $j_{0}=i_{0}$. 

Consequently, by mathematical induction, the hypothesis
is true for all pairs of $i$ and $j\leq i$. In other words, $\chmap$
is a smooth manifold map. Its inverse, denoted by $\chtospd$, is
given by $\chtospd(L)=LL^{\transpose}$ and clearly smooth. Therefore,
$\chmap$ and its inverse $\chtospd$ are diffeomorphisms.
\end{proof}

\section{Lie group structure and bi-invariant metric\label{sec:Lie-group-structure}}
In this section, we first construct a group structure and a bi-invariant metric on the manifold $\cholspace$, and then push them forward to the manifold $\spd$ via the Cholesky map. Parallel transport on SPD manifolds is also investigated. For a background of Riemannian geometry and Lie group, we recommend monographs \cite{Helgason2001,Lang1995}.

\subsection{Riemannian geometry on Cholesky spaces}

For matrices in $\cholspace$,
as off-diagonal elements in the lower triangular part are unconstrained
while  diagonal ones are restricted to be positive, $\cholspace$
can be parameterized by $\lowtri\ni X:\rightarrow\varphi(X)\in\cholspace$
in the way that $(\varphi(X))_{ij}=X_{ij}$ if $i\neq j$ and $(\varphi(X))_{jj}=\exp(X_{jj})$.
This motivates us to respectively endow the unconstrained part $\lp{\cdot}$ and the positive diagonal part $\dg{\cdot}$ with
a different metric and then combine them into a Riemannian metric
on $\cholspace$, as follows. First, we note that the tangent space
of $\cholspace$ at a given $L\in\cholspace$ is identified with the
linear space $\lowtri$. For such tangent space, we treat the strict lower triangular
space $\lp{\lowtri}\define\{\lp X:X\in\lowtri\}$ as the Euclidean
space $\real^{m(m-1)/2}$ with the usual Frobenius inner product $\froinnerprod XY=\sum_{i,j=1}^{m}X_{ij}Y_{ij}$
for all $X,Y\in\lp{\lowtri}$. For the diagonal part $\dg{\lowtri}\define\{\dg Z:Z\in\lowtri\}$,
we equipped it with a different inner product defined by $\froinnerprod{\dg L^{-1}\dg X}{\dg L^{-1}\dg Y}$.
Finally, combining these two components together, we define a metric
$\tilde{g}$ for tangent spaces $\tangentspace[\cholspace]L$ (identified
with $\lowtri)$ by 
\begin{align*}
\gchol_{L}(X,Y) & =\froinnerprod{\lp X}{\lp Y}+\froinnerprod{\dg L^{-1}\dg X}{\dg L^{-1}\dg Y}\\
& =\sum_{i>j}X_{ij}Y_{ij}+\sum_{j=1}^{m}X_{jj}Y_{jj}L_{jj}^{-2}.
\end{align*}
It is straightforward to show that the space $\cholspace$, equipped
with the metric $\gchol$, is a Riemannian manifold. We begin with geodesics on the manifold $(\cholspace,\gchol)$ to investigate
its basic properties.
\begin{prop}
\label{prop:geodesic-chol}On the Riemannian manifold $(\cholspace,\gchol)$,
the geodesic starting at $L\in\cholspace$ with direction $X\in\tangentspace[\cholspace]L$
is given by $$\geochol_{L,X}(t)=\lp L+t\lp X+\dg L\exp\{t\dg X\dg L^{-1}\}.$$ 
\end{prop}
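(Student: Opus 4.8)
The plan is to recognize $(\cholspace,\gchol)$ as a \emph{Riemannian product} and thereby reduce the problem to two elementary factors. Writing a tangent vector $X\in\tangentspace[\cholspace]L$ as $X=\lp X+\dg X$, the metric
\[
\gchol_L(X,Y)=\froinnerprod{\lp X}{\lp Y}+\froinnerprod{\dg L^{-1}\dg X}{\dg L^{-1}\dg Y}
\]
contains no cross terms between the strictly-lower-triangular block and the diagonal block. Moreover the first block is independent of the base point $L$, while the second depends only on the diagonal entries $L_{jj}$. Hence, under the identification $\cholspace\cong\lp{\lowtri}\times\dg{\cholspace}$, the metric splits as a product of the flat Euclidean space $(\lp{\lowtri},\froinnerprod{\cdot}{\cdot})$ and the positive-diagonal factor. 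A curve is then a geodesic precisely when each of its two components is a geodesic in the corresponding factor, so it suffices to determine the geodesics in each factor separately.

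On the flat factor $\lp{\lowtri}\cong\real^{m(m-1)/2}$ geodesics are straight lines, which with initial position $\lp L$ and velocity $\lp X$ gives $\lp L+t\lp X$. On the diagonal factor the metric decouples across the $m$ diagonal coordinates into the one-dimensional metric $x^{-2}\,\diffop x^{2}$ on $\realpos$ for each entry $x=L_{jj}$. Here I would use the isometry $x\mapsto\log x$ onto $(\real,\diffop u^{2})$, under which geodesics are affine functions $u_{0}+ct$; pulling back yields $x(t)=L_{jj}\exp(tX_{jj}/L_{jj})$ for the entry started at $L_{jj}$ with velocity $X_{jj}$. Assembling these over $j$, and using that diagonal matrices commute, the diagonal component is exactly $\dg L\exp\{t\dg X\dg L^{-1}\}$. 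Adding the two components recovers the claimed expression for $\geochol_{L,X}(t)$.

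It then remains to confirm the initial conditions: at $t=0$ the formula returns $\lp L+\dg L=L$, and differentiating at $t=0$ returns $\lp X+\dg X=X$, as required. The main point requiring care is the justification of the product reduction, namely verifying that in the chosen coordinates $\gchol$ is genuinely block diagonal with each block depending only on its own coordinates; once this is established the geodesic computation is routine. As an alternative that avoids invoking the product structure, I could instead verify the geodesic equation $\nabla_{\dot{\geochol}}\dot{\geochol}=0$ directly: the Christoffel symbols associated with the flat lower-triangular block vanish, while those of the diagonal block reduce to the scalar coefficient $\Gamma=-1/x$ computed from $g(x)=x^{-2}$, and a direct substitution of $L_{jj}\exp(tX_{jj}/L_{jj})$ confirms the equation entry by entry.
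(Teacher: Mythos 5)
Your proof is correct, and it takes a route that is structurally different from the paper's, even though both run on the same underlying engine. The paper argues via a single global chart: it takes $\log$ of the diagonal entries and leaves the strictly lower triangular entries alone, verifies that in these coordinates the metric components are constant ($\gchol_{ij}=0$ for $i\neq j$ and $\gchol_{ii}=1$), concludes that all Christoffel symbols vanish, and then checks that the claimed curve has affine coordinate expression, so the geodesic equations hold trivially. You instead exhibit $(\cholspace,\gchol)$ as a Riemannian product of the Euclidean factor $\lp{\lowtri}$ with $m$ copies of $(\realpos,x^{-2}\,\diffop x^{2})$, invoke the fact that a curve in a Riemannian product is a geodesic precisely when each factor projection is, and solve each one-dimensional factor by pulling geodesics back through the isometry $x\mapsto\log x$; the initial-condition bookkeeping then gives exactly $L_{jj}\exp(tX_{jj}/L_{jj})$, assembling to $\dg{L}\exp\{t\dg{X}\dg{L}^{-1}\}$. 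Your version is more conceptual: no Christoffel symbols need to be computed, since products and isometries preserve geodesics, and the per-factor reduction makes the formula essentially forced. What the paper's computational route buys in exchange is reusable by-products: the explicit chart and the vanishing Christoffel symbols are cited again later, in the proof that the sectional curvature is identically zero (\cref{thm:sectional-curvature}) and in the description of the Levi-Civita connection pushed forward to $(\spd,\gspd)$; under your approach those later facts would follow instead from ``a product of flat manifolds is flat,'' which is true but would need to be said. To make your write-up airtight you should do the two things you yourself flag: state (or cite) the product-geodesic lemma, i.e., that the Levi-Civita connection of a product metric is the direct sum of the factor connections, and record that $L\mapsto(\lp{L},\dg{L})$ is a diffeomorphism onto $\lp{\lowtri}\times(\realpos)^{m}$ under which $\gchol$ has no cross terms and each block depends only on its own coordinates; both are immediate from the formula for $\gchol$, so these are presentational gaps rather than mathematical ones.
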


\begin{proof}
Clearly, $\geochol_{L,X}(0)=L$ and $\geochol_{L,X}^{\prime}(0)=\lp X+\dg X=X$.
Now, we use $\mathrm{vec}(L)$ to denote the vector in $\real^{m(m+1)/2}$ such
that the first $m$ elements of $\mathrm{vec}(L)$ correspond to the diagonal elements of $L$. Define the map $x:\cholspace\rightarrow\real$
by
\[
x^{i}(L)=\begin{cases}
\log \mathrm{vec}(L)_{i} & \text{if }1\leq i\leq m,\\
\mathrm{vec}(L)_{i} & \text{otherwise},
\end{cases}
\]
where $x^{i}$ denotes the $i$th component of $x$. It can be checked
that $(\cholspace,x)$ is a chart for the manifold $\cholspace$.
Let $e_{i}$ be the $m(m+1)/2$ dimensional vector whose $i$th element
is one and other elements are all zero. For $1\leq i\leq m$, we define
$\partial_{i}=\mathrm{vec}(L)_ie_{i}$, and for $i>m$, define $\partial_{i}=e_{i}$.
The collection $\{\partial_{1},\ldots,\partial_{m(m+1)/2}\}$ is a frame. One can check that $\gchol_{ij}\define\gchol_{L}(\partial_{i},\partial_{j})=0$
if $i\neq j$, and $\gchol_{ii}=1$. This implies that $\partial\gchol_{jk}/\partial x^{l}=0$,
and hence all Christoffel symbols are all zeros, as
\[
\Gamma_{\:kl}^{i}=\frac{1}{2}\gchol^{ij}\left(\frac{\partial\gchol_{jk}}{\partial x^{l}}+\frac{\partial\gchol_{jl}}{\partial x^{k}}-\frac{\partial\gchol_{kl}}{\partial x^{j}}\right)=0,
\]
where Einstein summation convention is assumed. It can be checked that the $i$th coordinate $\geochol^{i}(t)=x^{i}\circ\geochol_{L,X}(t)$
of the curve $\tilde{\gamma}_{L,X}$ is given by $\geochol^{i}(t)=\log \mathrm{vec}(L)_{i}+t\mathrm{vec}(X)_{i}/\mathrm{vec}(L)_{i}$
when $i\leq m$ and $\geochol^{i}(t)=\mathrm{vec}(L)_{i}+t\mathrm{vec}(X)_{i}$ if $i>m$. Now, it
is an easy task to verify the following geodesic equations
\[
\frac{d^{2}\geochol^{i}}{dt^{2}}+\Gamma_{jk}^{i}\frac{d\geochol^{j}}{dt}\frac{d\geochol^{k}}{dt}=0
\]
for $i=1,\ldots,m(m+1)/2$. Therefore, $\geochol_{L,X}(t)$ is
the claimed geodesic.
\end{proof}

Given the above proposition, we can immediately derive the Riemannian
exponential map $\rechol$ at $L\in\cholspace$, which is given by
$$\rechol_{L}X=\geochol_{L,X}(1)=\lp L+\lp X+\dg L\exp\{\dg X\dg L^{-1}\}.$$
Also, for $L,K\in\cholspace$, with $X=\lp K-\lp L+\{\log\dg K-\log\dg L\}\dg L$,
one has $$\geochol_{L,X}(t)=\lp L+t\{\lp K-\lp L\}+\dg L\exp[t\{\log\dg K-\log\dg L\}].$$
Since $\geochol_{L,X}(0)=L$ and $\geochol_{L,X}(1)=K$, $\geochol_{L,X}$
is the geodesic connecting $L$ and $K$. Therefore, the distance
function on $\cholspace$ induced by $\gchol$, denoted by $d_{\cholspace}$,
is given by 
\[
d_{\cholspace}(L,K)=\{\gchol_{L}(X,X)\}^{1/2}=\left\{ \sum_{i>j}(L_{ij}-K_{ij})^{2}+\sum_{j=1}^{m}(\log L_{jj}-\log K_{jj})^{2}\right\} ^{1/2},
\]
where $X$ is the same as the above. The expression for the distance
function can be equivalently and more compactly written as $$d_{\cholspace}(L,K)=\{\fronorm[2]{\lp L-\lp K}+\fronorm[2]{\log\dg L-\log\dg K}\}^{1/2}.$$
\Cref{tab:mfd-chol} summarizes the above basic properties of
the manifold $(\cholspace,\gchol)$. 
\begin{table}
\renewcommand{\arraystretch}{1.5}
\caption{Basic properties of Riemannian manifolds $(\protect\cholspace,\protect\gchol)$. }
\begin{centering}
\begin{tabular}{|c|}
\hline 
tangent space at $L$ \tabularnewline
 $\lowtri$\tabularnewline
\hline 
\hline 
Riemannian metric \tabularnewline
 $\gchol_{L}(X,Y)=\sum_{i>j}X_{ij}Y_{ij}+\sum_{j=1}^{m}X_{jj}Y_{jj}L_{jj}^{-2}$\tabularnewline
\hline 
\hline 
geodesic emanating from $L$ with direction $X$ \tabularnewline
 $\geochol_{L,X}(t)=\lp L+t\lp X+\dg L\exp\{t\dg X\dg L^{-1}\}$\tabularnewline
\hline 
\hline 
Riemannian exponential map at $L$ \tabularnewline
 $\rechol_{L}X=\lp L+\lp X+\dg L\exp\{\dg X\dg L^{-1}\}$\tabularnewline
\hline 
\hline 
Riemannian logarithmic map at $L$ \tabularnewline
 $\logchol_{L}K=\lp K-\lp L+\dg L\log\{\dg L^{-1}\dg K\}$\tabularnewline
\hline 
\hline 
geodesic distance between $L$ and $K$ \tabularnewline
 $d_{\cholspace}(L,K)=\{\fronorm[2]{\lp L-\lp K}+\fronorm[2]{\log\dg L-\log\dg K}\}^{1/2}$\tabularnewline
\hline 
\end{tabular}
\par\end{centering}
\label{tab:mfd-chol}
\end{table}

\subsection{Riemannian metric for SPD matrices}\label{subsec:spd-metric}
As mentioned previously, the space $\spd$ of SPD matrices is a smooth submanifold of the space $\sym$ of symmetric matrices, whose tangent space at a given SPD matrix is identified with $\sym$. We also showed that  the map $\chtospd:\cholspace\rightarrow\spd$ by $\chtospd(L)=LL^{\transpose}$
is a diffeomorphism between $\cholspace$ and $\spd$. For a square matrix $S$, we define a lower
triangular matrix $S_{\half}=\lp S+\dg S/2$. In another word, the
matrix $S_{\half}$ is the lower triangular part of $S$ with the
diagonal elements halved. The differential of $\chtospd$ is given in the following.
\begin{prop}
The differential $\differential{\chtospd}L:\tangentspace[\cholspace]L\rightarrow\tangentspace[{\spd}]{LL^{\transpose}}$
of $\chtospd$ at $L$ is given by $$(\differential{\chtospd}L)(X)=LX^{\transpose}+XL^{\transpose}.$$
Also, the inverse $(\differential{\chtospd}L)^{-1}:\tangentspace[{\spd}]{LL^{\transpose}}\rightarrow\tangentspace[\cholspace]L$
of $\differential{\chtospd}L$ exists for all $L\in\cholspace$ and
is given by $$(\differential{\chtospd}L)^{-1}(W)=L(L^{-1}WL^{-\transpose})_{\half}$$
for $W\in\sym$.
\end{prop}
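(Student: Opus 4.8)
The plan is to treat the two assertions in turn, deriving the formula for the differential first and then verifying that the stated expression is genuinely its inverse by a direct composition.

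For the differential, I would exploit that $\cholspace$ is an open subset of the linear space $\lowtri$, so that for $X\in\tangentspace[\cholspace]L$ (identified with $\lowtri$) the straight curve $t\mapsto L+tX$ stays in $\cholspace$ for small $|t|$, because $L_{jj}>0$ forces $L_{jj}+tX_{jj}>0$, and this curve represents the tangent vector $X$. Differentiating $\chtospd(L+tX)=(L+tX)(L+tX)^{\transpose}$ at $t=0$, the quadratic term drops out and the cross terms yield $(\differential{\chtospd}L)(X)=XL^{\transpose}+LX^{\transpose}$, which is manifestly symmetric and hence a legitimate element of $\tangentspace[{\spd}]{LL^{\transpose}}$ (identified with $\sym$).

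For the inverse, the strategy is to show that $\Phi(W):=L(L^{-1}WL^{-\transpose})_{\half}$ defines a linear map $\sym\rightarrow\lowtri$ that is a right inverse of $\differential{\chtospd}L$. First I would check well-definedness of the target: since $W$ is symmetric, $S:=L^{-1}WL^{-\transpose}$ is symmetric, so $S_{\half}=\lp S+\dg S/2$ is lower triangular, and left-multiplication by the lower triangular $L$ keeps $\Phi(W)$ in $\lowtri$. Then, writing $Y:=\Phi(W)=LS_{\half}$, I would compute
\[
(\differential{\chtospd}L)(Y)=LY^{\transpose}+YL^{\transpose}=L\left(S_{\half}+S_{\half}^{\transpose}\right)L^{\transpose}.
\]

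The crux of the argument, and really the only nontrivial point, is the algebraic identity $S_{\half}+S_{\half}^{\transpose}=S$ valid for any symmetric $S$: writing $S=\lp S+\dg S+(\lp S)^{\transpose}$ for symmetric $S$, the transpose of $S_{\half}$ restores the strictly upper triangular part $(\lp S)^{\transpose}$ while the two halved diagonals recombine into the full $\dg S$. Substituting this identity gives $(\differential{\chtospd}L)(Y)=LSL^{\transpose}=L(L^{-1}WL^{-\transpose})L^{\transpose}=W$, so $\Phi$ is a right inverse. Finally, since $\differential{\chtospd}L$ and $\Phi$ are linear maps between $\lowtri$ and $\sym$, both of dimension $m(m+1)/2$, the existence of a right inverse forces $\differential{\chtospd}L$ to be surjective and hence bijective, so $\Phi$ is automatically the unique two-sided inverse. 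I expect no genuine obstacle here; the main things to get right are the bookkeeping in the identity for $S_{\half}$ and the observation that the dimension count lets one verify only a single composition.
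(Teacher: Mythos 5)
Your proposal is correct and takes essentially the same route as the paper: the differential is computed along the same straight-line curve $t\mapsto L+tX$ (valid since $\cholspace$ is open in $\lowtri$), and the inverse formula rests on the same algebraic fact relating the operation $(\cdot)_{\half}$ to symmetrized products of lower triangular matrices. The only difference is the direction of the second half---the paper solves $W=LX^{\transpose}+XL^{\transpose}$ for $X$ by applying $(\cdot)_{\half}$ to $L^{-1}WL^{-\transpose}=L^{-1}X+(L^{-1}X)^{\transpose}$, obtaining injectivity directly, whereas you verify the candidate map as a right inverse via $S_{\half}+S_{\half}^{\transpose}=S$ and conclude bijectivity by dimension count---an immaterial variation.
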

\begin{proof}
Let $X\in\lowtri$ and $L\in\cholspace$. Then $\geochol_{L}(t)=L+tX$
is a curve passing through $L$ if $t\in(-\epsilon,\epsilon)$ for
a sufficiently small $\epsilon>0$. Note that for every such $t$,
$\geochol_{L}(t)\in\cholspace$. 
Then $\geospd_{LL^{\transpose}}(t)=\chtospd(\geochol_{L}(t))$ is
a curve passing through $LL^{\transpose}$. The differential is then
derived from $$(\differential{\chtospd}L)(X)=(\chtospd\circ\geochol_{L})^{\prime}(0)=\frac{\diffop}{\diffop t}\chtospd(\geochol_{L}(t))\mid_{t=0}=LX^{\transpose}+XL^{\transpose}.$$ 

On the other hand, if $W=LX^{\transpose}+XL^{\transpose}$, then since
$L$ is invertible, we have $$L^{-1}WL^{-\transpose}=X^{\transpose}L^{-\transpose}+L^{-1}X=L^{-1}X+(L^{-1}X)^{\transpose}.$$
Note that $L^{-1}X$ is also a lower triangular matrix, and the matrix
on the left hand side is symmetric, we deduce that $(L^{-1}WL^{-\transpose})_{\half}=L^{-1}X$,
which gives rise to $X=L(L^{-1}WL^{-\transpose})_{\half}$. The linear
map $(\differential{\chtospd}L)(X)=LX^{\transpose}+XL^{\transpose}$
is one-to-one, since from the above derivation, $(\differential{\chtospd}L)(X)=0$
if and only if $X=0$.
\end{proof}
Given the above proposition, the manifold map $\chtospd$, which is exactly the inverse of the Cholesky map $\chmap$ discussed in \cref{subsec:cholesky-decomposition}, induces
a Riemannian metric on $\spd$, denoted by $\gspd$ and called \emph{Log-Cholesky
metric}, given by 
\begin{equation}
\gspd_{P}(W,V)=\gchol_{L}\left(L(L^{-1}WL^{-\transpose})_{\half},L(L^{-1}VL^{-\transpose})_{\half}\right),\label{eq:spd-metric}
\end{equation}
where $L=\chtospd^{-1}(P)=\ch P$ is the Cholesky factor of $P\in\spd$,
and $W,V\in\sym$ are tangent vectors at $P$. \linremark{This implies that $$\gchol_L(X,Y)=\gspd_{\chtospd(L)}\big((D_L\chtospd)(X),(D_L\chtospd)(Y)\big)$$ for all $L$ and $X,Y\in\tangentspace[{\cholspace}]{L}$.  According to Definition 7.57 of \cite{Lee2009}, the map $\chtospd$
is an isometry between $(\cholspace,\gchol)$ and $(\spd,\gspd)$. A Riemannian isometry provides correspondence of Riemannian properties and objects between two Riemannian manifolds. This enables us to study the properties of $(\spd,\gspd)$ via the manifold $(\cholspace,\gchol)$ and the isometry $\chtospd$. For example, we can obtain geodesics on $\spd$ by mapping geodesics on $\cholspace$. More precisely,}
 the geodesic emanating from $P=LL^{\transpose}$ with $L=\ch P$
is given by
\[
\geospd_{P,W}(t)=\chtospd\left(\geochol_{L,X}(t)\right)=\geochol_{L,X}(t)\geochol_{L,X}^{\transpose}(t),
\]
where $X=L(L^{-1}WL^{-\transpose})_{\half}$ and $W\in\tangentspace[\spd]{P}$. Similarly, the Riemannian exponential at $P$ 
is given by $$\respd_{P}W=\chtospd(\rechol_{L}X)=(\rechol_{L}X)(\rechol_{L}X)^{\transpose},$$
while the geodesic between $P$ and $Q$ is characterized by 
\[
\geospd_{P,W}(t)=\geochol_{L,X}(t)\geochol_{L,X}^{\transpose}(t),
\]
with $L=\ch P$, $K=\ch Q$, $X=\lp K-\lp L+\{\log\dg K-\log\dg L\}\dg L$
and $W=LX^{\transpose}+XL^{\transpose}$. Also, the geodesic distance
between $P$ and $Q$ is $$d_{\spd}(P,Q)=d_{\cholspace}(\ch P,\ch Q).$$
\linremark{Moreover, the Levi-Civita connection $\nabla$ of $(\spd,\gspd)$ can be obtained by the Levi-Civita connection $\tilde{\nabla}$ of $(\cholspace,\gchol)$. To see this, let $W$ and $V$ be two smooth vector fields on $\spd$. Define vector fields $X$ and $Y$ on $\cholspace$ by $X(L)=(D_{LL^\transpose}\chmap)W(LL^\transpose)$ and $Y(L)=(D_{LL^\transpose}\chmap)V(LL^\transpose)$ for all $L\in\cholspace$. Then $\nabla_WV=(D\chtospd)(\tilde{\nabla}_XY)$, and the Christoffel symbols to compute the connection $\tilde{\nabla}$ has been given  in the proof of  \cref{prop:geodesic-chol}.}

\Cref{tab:mfd-spd} summarizes some basic properties of the manifold
$(\spd,\gspd)$. Note that the differential $\differential{\chmap}P$
can be computed efficiently, since it only involves Cholesky decomposition
and the inverse of a lower triangular matrix, for both of which there
exist efficient algorithms. Consequently, all  maps
in \cref{tab:mfd-spd} can be evaluated in an efficient way.
In contrast, computation of Riemannian exponential/logarithmic maps
for the Log-Euclidean metric \cite{Arsigny2007} requires evaluation of 
some series of an infinite number of terms; see Eq. (2.1) and Table
4.1 of \cite{Arsigny2007}.

\begin{table}
\renewcommand{\arraystretch}{1.5}
\caption{Properties of Riemannian manifold $(\protect\spd,\protect\gspd)$. }
\begin{centering}
\begin{tabular}{|c|}
\hline 
tangent space at $P$ \tabularnewline
 $\sym$\tabularnewline
\hline 
\hline 
differential of $\chtospd$ at $L$ \tabularnewline
 $\differential{\chtospd}L:\,X\longmapsto LX^{\transpose}+XL^{\transpose}$\tabularnewline
\hline 
\hline 
differential of $\chmap$ at $P$ \tabularnewline
 $\differential{\chmap}P:\,W\longmapsto\ch P(\ch P^{-1}W\ch P^{-\transpose})_{\frac{1}{2}}$ \tabularnewline
\hline 
\hline 
Riemannian metric  \tabularnewline
 $\gspd_{P}(W,V)=\gchol_{\ch P}((\differential{\chmap}P)(W),(\differential{\chmap}P)(W))$\tabularnewline
\hline 
\hline 
geodesic emanating from $P$ with direction $W$ \tabularnewline
 $\geospd_{P,W}(t)=\geochol_{\ch P,(\differential{\chmap}P)(W)}(t)\geochol_{\ch P,(\differential{\chmap}P)(W)}^{\transpose}(t)$\tabularnewline
\hline 
\hline 
Riemannian exponential map at $P$ \tabularnewline
 $\respd_{P}W=\rechol_{\ch P}(\differential{\chmap}P)(W))\{\rechol_{\ch P}(\differential{\chmap}P)(W)\}^{\transpose}$\tabularnewline
\hline 
\hline 
Riemannian logarithmic map at $P$ \tabularnewline
 $\Log_{P}Q=(\differential{\chtospd}{\ch P})(\logchol_{\ch P}\ch Q)$\tabularnewline
\hline 
\hline 
geodesic distance between $P$ and $Q$ \tabularnewline
 $d_{\spd}(P,Q)=d_{\cholspace}(\ch P,\ch Q)$.\tabularnewline
\hline 
\end{tabular}
\par\end{centering}
\label{tab:mfd-spd}
\end{table}

\subsection{Lie group structure and bi-invariant metrics }

We define an operator $\mygroupop$ on $\lowtri$ by $$X\mygroupop Y=\lp X+\lp Y+\dg X\dg Y.$$
Note that $\cholspace\subset\lowtri$. Moreover, if $L,K\in\cholspace$,
then $L\mygroupop K\in\cholspace$. It is not difficult to see that
$\mygroupop$ is a smooth commutative group operation on the manifold $\cholspace$, where the inverse of $L$, denoted by $L_{\mygroupop}^{-1}$, is $\dg{L}^{-1}-\lp{L}$.
The left translation by $A\in\cholspace$ is denoted by $\ell_{A}:B\mapsto A\mygroupop B$.
One can check that the differential of this operation \linremark{at $L\in\cholspace$} is 
\begin{equation}\label{eq:diff-lA}
\differential{\ell_{A}}L:X\mapsto\lp X+\dg A\dg X,
\end{equation}
\linremark{where it is noted that the differential $\differential{\ell_A}L$ does not depend on $L$. Given the above expression, one can find that}
\begin{align*}
& \gchol_{A\mygroupop L}((\differential{\ell_{A}}L)(X),(\differential{\ell_{A}}L)(Y)) \\
& =\gchol_{A\mygroupop L}(\lp X+\dg A\dg X,\lp Y+\dg A\dg Y) \\
& =\gchol_{L}(X,Y).
\end{align*}
Similar observations are made for right translations. Thus, the metric
$\gchol$ is a bi-invariant metric that turns $(\cholspace,\mygroupop)$ into a Lie group. 

The group operator $\mygroupop$ and maps $\chtospd$ and $\chmap$ together induce
a smooth operation $\spdgpop$ on $\spd[m]$, defined by
\begin{align*}
 P\spdgpop Q & =\chtospd(\ch P\mygroupop\ch Q)\\
 & =(\ch P\mygroupop\ch Q)(\ch P\mygroupop\ch Q)^{\transpose},\quad\text{for }P,Q\in\spd.
\end{align*}
In addition, both $\chmap$ and $\chtospd$ are Riemannian isometries and group
isomorphisms between Lie groups $(\cholspace,\gchol,\mygroupop)$ and $(\spd,\gspd,\spdgpop)$. 

\begin{thm}
The space $(\spd,\spdgpop)$ is an abelian Lie group. Moreover, the
metric $\gspd$ defined in \cref{eq:spd-metric} is a bi-invariant
metric on $(\spd,\spdgpop)$.
\end{thm}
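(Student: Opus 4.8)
The plan is to prove everything by transport of structure along the Cholesky map, exploiting the fact established just above the statement that $\chtospd$ (with inverse $\chmap$) is simultaneously a group isomorphism between $(\cholspace,\mygroupop)$ and $(\spd,\spdgpop)$ and a Riemannian isometry between $(\cholspace,\gchol)$ and $(\spd,\gspd)$, together with the facts proved earlier that $(\cholspace,\mygroupop)$ is an abelian Lie group and that $\gchol$ is bi-invariant on it. Nothing new needs to be computed directly on $\spd$; every assertion is pulled back to the corresponding, already-verified, assertion on $\cholspace$.

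First I would verify the abelian Lie group structure on $\spd$. The operation is defined precisely so that $\chtospd$ intertwines $\mygroupop$ and $\spdgpop$, that is, $P\spdgpop Q=\chtospd(\ch P\mygroupop\ch Q)$, and the product lands in $\spd$ because $\ch P\mygroupop\ch Q\in\cholspace$. Since $\chtospd$ is a bijection and $\mygroupop$ is an abelian group operation with identity the identity matrix $I$ and inverse $L\mapsto \dg L^{-1}-\lp L$, the group axioms transfer verbatim by applying $\chmap$ to both sides of each identity: the identity of $(\spd,\spdgpop)$ is $\chtospd(I)=I$, the inverse of $P$ is $\chtospd\big((\ch P)_{\mygroupop}^{-1}\big)$, and associativity and commutativity descend from those of $\mygroupop$. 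Smoothness of the multiplication $(P,Q)\mapsto P\spdgpop Q$ and of inversion follows because, through the diffeomorphisms $\chtospd$ and $\chmap$, each is a composition of smooth maps. This shows $(\spd,\spdgpop)$ is a genuine abelian Lie group, not merely an abstract group.

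Next I would prove bi-invariance of $\gspd$. The key observation is that left translation on $\spd$ by $A$ is conjugate, through the Cholesky map, to left translation on $\cholspace$ by $\ch A$. Writing $\lambda_A\colon P\mapsto A\spdgpop P$ for the left translation on $\spd$ and $\ell_{\ch A}$ for the one on $\cholspace$, the definition of $\spdgpop$ unwinds to $\lambda_A=\chtospd\circ\ell_{\ch A}\circ\chmap$. Here $\chmap$ and $\chtospd$ are isometries between $(\cholspace,\gchol)$ and $(\spd,\gspd)$, while $\ell_{\ch A}$ is an isometry of $(\cholspace,\gchol)$ because $\gchol$ was shown to be left-invariant via $\gchol_{A\mygroupop L}\big((\differential{\ell_A}L)(X),(\differential{\ell_A}L)(Y)\big)=\gchol_L(X,Y)$. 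Hence $\lambda_A$, being a composition of three isometries, is an isometry of $(\spd,\gspd)$, i.e. $\gspd$ is left-invariant. The identical argument with right translations, using right-invariance of $\gchol$, yields right-invariance of $\gspd$, so $\gspd$ is bi-invariant.

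I expect no serious obstacle: the whole argument is bookkeeping riding on the already-proven isometry-and-isomorphism property of the Cholesky map. The only points requiring a little care are to confirm the conjugation identity $\lambda_A=\chtospd\circ\ell_{\ch A}\circ\chmap$ (and its right-translation analogue) at the level of maps, which is a direct consequence of $P\spdgpop Q=\chtospd(\ch P\mygroupop\ch Q)$, and to invoke the standard principle that a diffeomorphism intertwining two group operations automatically transfers smoothness of multiplication and inversion. With these in hand both halves of the theorem follow immediately.
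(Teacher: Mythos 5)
Your proposal is correct and follows essentially the same route as the paper: both reduce every assertion to the already-established abelian Lie group structure and bi-invariant metric on $(\cholspace,\mygroupop,\gchol)$, using the fact that $\chtospd$ intertwines $\mygroupop$ with $\spdgpop$ (equivalently $\ch{P\spdgpop Q}=\ch P\mygroupop\ch Q$) and is a Riemannian isometry. The only cosmetic difference is that the paper checks associativity by a short explicit computation and states the bi-invariance transfer tersely, whereas you invoke transport of structure wholesale and spell out the conjugation identity $\lambda_A=\chtospd\circ\ell_{\ch A}\circ\chmap$; the substance is identical.
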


\begin{proof}
It is clear that $\spd$ is closed under the operation $\spdgpop$, and the identity element is the identity matrix.
For $P\in\spd$, the inverse under $\spdgpop$ is given by $(\ch P_{\mygroupop}^{-1})(\ch P_{\mygroupop}^{-1})^{\transpose}$. For associativity, we
first observe that $\ch{P\spdgpop Q}=\ch P\mygroupop\ch Q$, based
on which we further deduce that
\begin{align*}
(P\spdgpop Q)\spdgpop S & =(\ch{P\spdgpop Q}\mygroupop\ch S)(\ch{P\spdgpop Q}\mygroupop\ch S)^{\transpose}\\
 & =(\ch P\mygroupop\ch Q\mygroupop\ch S)(\ch P\mygroupop\ch Q\mygroupop\ch S)^{\transpose}\\
 & =(\ch P\mygroupop\ch{Q\spdgpop S})(\ch P\mygroupop\ch{Q\spdgpop S})^{\transpose}\\
 & =P\spdgpop(Q\spdgpop S).
\end{align*}
Therefore, $(\spd,\spdgpop)$ is a group. The commutativity and smoothness
of $\spdgpop$ stem from the commutativity and smoothness of $\mygroupop$,
respectively. It can be checked that $\chtospd$ is a group isomorphism
and isometry between Lie groups $(\cholspace,\mygroupop)$ and $(\spd,\spdgpop)$
respectively endowed with Riemannian metrics $\gchol$ and $\gspd$.
Then, the bi-invariance of $\gspd$ follows from the bi-invariance
of $\gchol$. 
\end{proof}

\subsection{Parallel transport along geodesics on $\protect\spd$}

In some applications like statistical analysis or machine learning
on Riemannian manifolds, parallel transport of tangent vectors along
geodesics is required. For instance, in \cite{Yuan2012} that studies
regression on SPD-valued data, tangent vectors are transported to
a common place to derive statistical estimators of interest. Also,
optimization in the context of statistics for manifold-valued data
often involves parallel transport of tangent vectors. Examples include
 Hamiltonian Monte Carlo algorithms  \cite{Kim2015}
as well as  optimization algorithms  \cite{Hosseini2015}
to train manifold-valued Gaussian mixture models. In these scenarios,
Riemannian metrics on SPD matrices that result in efficient computation
for parallel transport are attractive, in particular in the era
of big data. In this regard, as discussed in the introduction, evaluation
of parallel transport along geodesics for  the affine-invariant metric
is simple and efficient in computation, while the one for the Log-Euclidean
metric is computationally intensive. Below we show that parallel transport
for the presented metric also has a simple form, starting with a lemma.
\begin{lem}
\label{lem:transport}Let $(\mathcal{G},\cdot)$ be an abelian Lie
group with a bi-invariant metric. The parallel transporter $\tau_{p,q}$
that transports tangent vectors at $p$ to tangent vectors at $q$
along geodesics connecting $p$ and $q$ is given by $\tau_{p,q}(u)=(\differential{\ell_{q\cdot p^{-1}}}p)u$
for $u\in T_{p}\mathcal{G}$.
\end{lem}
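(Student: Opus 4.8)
The plan is to exploit the two defining properties of an abelian Lie group with a bi-invariant metric: that on such a group the Levi-Civita connection has the one-parameter subgroups as geodesics, and that parallel transport is realized by the differential of a group translation. Since $(\mathcal{G},\cdot)$ is abelian, left and right translations coincide, so I need not distinguish $\ell$ from $r$; this will simplify everything. The claimed transporter $\tau_{p,q}(u)=(\differential{\ell_{q\cdot p^{-1}}}p)u$ is precisely the differential at $p$ of the translation by the group element $q\cdot p^{-1}$ that carries $p$ to $q$, so the content of the lemma is that this translation map transports tangent vectors parallelly along the connecting geodesic.

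First I would recall the standard fact that on a Lie group with a bi-invariant metric the Levi-Civita connection satisfies $\nabla_XY=\frac12[X,Y]$ for left-invariant vector fields $X,Y$; in the abelian case the Lie bracket vanishes identically, so $\nabla_XY=0$ for all left-invariant fields. Consequently the geodesics through the identity are exactly the one-parameter subgroups $t\mapsto\exp(tv)$, and more generally the geodesic from $p$ with initial velocity $u$ is the left translate $t\mapsto p\cdot\exp\{t(\differential{\ell_{p^{-1}}}p)u\}$. This identifies the unique geodesic connecting $p$ and $q$ and sets up the framework in which parallel transport must be computed.

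Next I would verify that the candidate transporter produces parallel vector fields along this geodesic. The natural approach is to transport a left-invariant frame: if $\{E_i\}$ is a basis of left-invariant vector fields, then each $E_i$ is parallel along every geodesic because $\nabla_{\gamma'}E_i=0$ follows from $\nabla_XY=0$ for left-invariant fields together with the fact that $\gamma'$ is itself (the restriction of) a left-invariant field along a one-parameter-subgroup geodesic. Writing an arbitrary $u\in T_p\mathcal{G}$ in terms of this frame and transporting each component, the parallel translate of $u$ along the geodesic is obtained by applying the differential of the appropriate translation; evaluated at the endpoint $q$, the translation carrying $p$ to $q$ is exactly $\ell_{q\cdot p^{-1}}$ (using commutativity to write $q=(q\cdot p^{-1})\cdot p$). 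Since parallel transport along a geodesic is unique, this establishes $\tau_{p,q}(u)=(\differential{\ell_{q\cdot p^{-1}}}p)u$.

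The main obstacle, and the step deserving the most care, is the verification that the left-invariant frame is genuinely parallel along the geodesic, i.e.\ justifying $\nabla_{\gamma'}E_i=0$ rigorously rather than merely invoking $\nabla_XY=\frac12[X,Y]$ formally. One must check that the tangent field $\gamma'$ along the one-parameter-subgroup geodesic agrees with the restriction of a left-invariant vector field, so that the bi-invariance identity genuinely applies; this is where abelianness is used twice, both to kill the bracket and to identify the connecting translation as $\ell_{q\cdot p^{-1}}$. I expect the remaining computations — expressing $u$ in the frame and pushing components through the differential — to be routine, so the crux is cleanly reducing parallel transport to the vanishing connection on left-invariant fields.
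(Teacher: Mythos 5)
Your proposal is correct and takes essentially the same approach as the paper: both arguments reduce the problem to the vanishing of $\nabla_XY=\frac12[X,Y]$ on left-invariant fields in the abelian case, check that the geodesic's tangent field is the restriction of a left-invariant field, and identify the parallel transport of $u$ with its left-invariant extension evaluated at $q$, which is $(\differential{\ell_{q\cdot p^{-1}}}p)u$. Your frame $\{E_i\}$ is merely a componentwise restatement of the paper's single left-invariant field $Z(q)\define(\differential{\ell_{q\cdot p^{-1}}}p)u$ (and note that $q=(q\cdot p^{-1})\cdot p$ needs no commutativity, so abelianness enters only through the bracket).
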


\begin{proof}
For simplicity, we abbreviate $p\cdot q$ as $pq$. Let $\mathfrak{g}$
denote the Lie algebra associated with the Lie group $\mathcal{G}$,
and $\nabla$ the Levi-Civita connection on $\mathcal{G}$. Note that
we identify elements in $\mathfrak{g}$ with left-invariant vector
fields on $\mathcal{G}$. We shall first recall that  $\nabla_{Y}Z=[Y,Z]/2$ for $Y,Z\in\mathfrak{g}$ (see the proof of Theorem 21.3 in \cite{Milnor1963}),
where $[\cdot,\cdot]$ denotes the Lie bracket of $\mathcal{G}$.
As $\mathcal{G}$ is abelian, the Lie bracket vanishes everywhere and hence $\nabla_{Y}Z=0$ if $Y,Z\in\mathfrak{g}$.

Let $\gamma_{p}(t)=\ell_{p}(\mathfrak{exp}(tY))$ for $Y\in\mathfrak{g}$
such that $\mathfrak{exp}(Y)=p^{-1}q$, where $\mathfrak{exp}$ denotes
the Lie group exponential map. Recall that for a bi-invariant Lie group, the group exponential map coincides with the Riemannian exponential
map at the group identity $e$, and left translations are isometries. Thus, $\gamma_{p}$ is a
geodesic. Using the fact $\gamma_{e}(t+s)=\gamma_{e}(t)\gamma_{e}(s)=\ell_{\gamma_{e}(t)}(\gamma_e(s))$
according to Lemma 21.2 of \cite{Milnor1963}, by the chain rule of differential,
we have 
\[
\gamma_{e}^{\prime}(t)=\frac{\diffop}{\diffop t}\gamma_{e}(t+s)\mid_{s=0}=(\differential{\ell_{\gamma_{e}(t)}}e)\left(\gamma_e^{\prime}(0)\right)=(\differential{\ell_{\gamma_{e}(t)}}e)\left(Y(e)\right)=Y(\gamma_{e}(t)),
\]
from which we further deduce that $$\gamma_{p}^{\prime}(t)=(\differential{\ell_{p}}{\gamma_{e}(t)})\gamma_{e}^{\prime}(t)=(\differential{\ell_{p}}{\gamma_{e}(t)})Y(\gamma_{e}(t))=Y(p\gamma_{e}(t)).$$

Now define a vector field $Z(q)\define(\differential{\ell_{qp^{-1}}}p)u$.
We claim that $Z$ is a left-invariant vector field
on $\mathcal{G}$ and hence belongs to $\mathfrak{g}$, since 
\begin{align*}
Z(hq) & =(\differential{\ell_{hqp^{-1}}}p)u =\{\differential{(\ell_{h}\circ\ell_{qp^{-1}}}p)\}u\\
& =(\differential{\ell_{h})}q(\differential{\ell_{qp^{-1}}}p)u  =(\differential{\ell_{h})}q(Z(q)),
\end{align*}
where the third equality is obtained by the chain rule of differential.
Consequently, $\nabla_{\gamma_{p}^{\prime}}Z=0$ since
$\gamma_{p}^{\prime}(t)=Y(p\gamma_{e}(t))$ and  $\nabla_YZ=0$ for $Y,Z\in\mathfrak{g}$. As additionally $Z(\gamma_{p}(0))=Z(p)=u$,
transportation of $u$ along the geodesic $\gamma_{p}$ is realized
by the left-invariant vector field $Z$. Since $\gamma_{p}$
is a geodesic with $\gamma_{p}(0)=p$ and $\gamma_{p}(1)=\ell_{p}\mathfrak{exp}(Y)=p(p^{-1}q)=q$,
we have that $$\tau_{p,q}(u)=Z(\gamma_{p}(1))=Z(q)=(\differential{\ell_{qp^{-1}}}p)u,$$
as claimed.
\end{proof}
\begin{prop}
\label{prop:transport}A tangent vector $X\in\tangentspace[\cholspace]L$
is parallelly transported to the tangent vector $\lp X+\dg K\dg L^{-1}\dg X$
at $K$. For $P,Q\in\spd$ and $W\in\sym$, 
$$\tau_{P,Q}(W)=K\{\lp X+\dg K\dg L^{-1}\dg X\}^\transpose+\{\lp X+\dg K\dg{L^{-1}}\dg X\}K^{\transpose},$$
where $L=\ch P$, $K=\ch Q$ and $X=L(L^{-1}WL^{-\transpose})_{\frac{1}{2}}$. 
\end{prop}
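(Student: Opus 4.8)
The plan is to reduce the statement to the abelian bi-invariant Lie group structure on $(\cholspace,\mygroupop)$ via \cref{lem:transport}, and then transfer the result to $\spd$ using that $\chtospd$ is a Riemannian isometry. I would prove the two displayed claims in that order.

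For the first claim I apply \cref{lem:transport} to the abelian Lie group $(\cholspace,\mygroupop)$ with the bi-invariant metric $\gchol$, which gives $\tau_{L,K}(X)=(\differential{\ell_{K\mygroupop L_{\mygroupop}^{-1}}}L)(X)$. By \cref{eq:diff-lA} the differential $\differential{\ell_A}L$ depends on $A$ only through its diagonal part $\dg A$, so the sole computation needed is $\dg{(K\mygroupop L_{\mygroupop}^{-1})}$. Unwinding $L_{\mygroupop}^{-1}=\dg L^{-1}-\lp L$ and $X\mygroupop Y=\lp X+\lp Y+\dg X\dg Y$, one obtains $K\mygroupop L_{\mygroupop}^{-1}=\lp K-\lp L+\dg K\dg L^{-1}$; since the two strictly lower triangular terms carry no diagonal while $\dg K\dg L^{-1}$ is diagonal, its diagonal part is $\dg K\dg L^{-1}$. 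Substituting into \cref{eq:diff-lA} then gives $\tau_{L,K}(X)=\lp X+\dg K\dg L^{-1}\dg X$, as claimed.

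For the second claim I use naturality of parallel transport under the Riemannian isometry $\chtospd$: if a vector field is parallel along a geodesic in $\cholspace$, then its image under $\differential\chtospd$ is parallel along the image geodesic in $\spd$. Writing $P=LL^\transpose$ and $Q=KK^\transpose$, this is the identity $\tau_{P,Q}=(\differential{\chtospd}K)\circ\tau_{L,K}\circ(\differential{\chtospd}L)^{-1}$. Given $W\in\sym$, the differential proposition supplies $(\differential{\chtospd}L)^{-1}(W)=L(L^{-1}WL^{-\transpose})_{\half}=X$; the first claim transports this to $\lp X+\dg K\dg L^{-1}\dg X$ at $K$; and the formula $(\differential{\chtospd}K)(Z)=KZ^\transpose+ZK^\transpose$ applied with $Z=\lp X+\dg K\dg L^{-1}\dg X$ yields precisely the stated expression for $\tau_{P,Q}(W)$.

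The computations with $\mygroupop$, $L_{\mygroupop}^{-1}$, and the diagonal operator $\dgmap$ are routine once the definitions are unwound. The step requiring genuine care is the transfer across $\chtospd$: I must invoke the correct naturality of parallel transport under isometries and, in particular, get the composition order right---pull $W$ back to $\cholspace$ by $(\differential{\chtospd}L)^{-1}$, transport inside $\cholspace$ from $L$ to $K$, and only then push forward by $\differential{\chtospd}K$. This is the main obstacle, since reversing the roles of $L$ and $K$ or transporting before pulling back would produce a different, incorrect vector.
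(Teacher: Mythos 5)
Your proposal is correct and follows essentially the same route as the paper: apply \cref{lem:transport} to the abelian bi-invariant Lie group $(\cholspace,\mygroupop)$ together with \cref{eq:diff-lA} to get $\tau_{L,K}(X)=\lp X+\dg K\dg L^{-1}\dg X$, then transfer to $\spd$ via the isometry $\chtospd$. In fact you supply details the paper leaves implicit---the explicit computation $K\mygroupop L_{\mygroupop}^{-1}=\lp K-\lp L+\dg K\dg L^{-1}$ and the precise naturality identity $\tau_{P,Q}=(\differential{\chtospd}K)\circ\tau_{L,K}\circ(\differential{\chtospd}L)^{-1}$, where the paper only remarks that the SPD statement ``follows from the fact that $\chtospd$ and $\chmap$ are isometries.''
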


\begin{proof}
By \cref{lem:transport}, it is seen that $\tau_{L,K}(X)=(\differential{\ell_{K\mygroupop L_{\mygroupop}^{-1}}}L)X$. According to \cref{eq:diff-lA}, $$(\differential{\ell_{K\mygroupop L_{\mygroupop}^{-1}}}L)X=\lp X+\dg{K\mygroupop L_{\mygroupop}^{-1}}\dg X=\lp X+\dg K\dg L^{-1}\dg X.$$
The statement for $P,Q,W$ follows from the fact that $\chtospd$ and $\chmap$ are
an isometries.
\end{proof}
The above proposition shows that parallel transport for the presented
Log-Cholesky metric can be computed rather efficiently. In fact, the
only nontrivial steps in computation are to perform Cholesky decomposition
of two SPD matrices and to inverse a lower triangular matrix, for
both of which there exist efficient algorithms. It is numerically
faster than the affine-invariant metric and Log-Euclidean metric. For instance, for $5\times5$ SPD matrices,
in a MATLAB computational environment, on average it takes 9.3ms (Log-Euclidean),
0.85ms (affine-invariant) and 0.2ms (Log-Cholesky) on an Intel(R)
Core i7-4500U (1.80GHz) to perform a parallel transport. We see that, to do parallel transport, the Log-Euclidean
metric is about 45 times slower than the Log-Cholesky metric, since it has to evaluate the differential of a matrix logarithmic map that
is expressed as a convergent series of infinite terms of products
of matrices. In practice, only a finite leading terms are evaluated.
However, in order to avoid significant loss of precision, a large
number of terms are often needed. For instance, for a precision of
the order of $10^{-12}$, typically approximately 300 leading terms are required. The affine-invariant metric, despite having a simple form for parallel
transport, is still about 4 times slower than our Log-Cholesky metric, partially due to that more matrix inversion and multiplication operations are needed.

\section{Mean of distributions on $\protect\spd$\label{sec:Mean-of-distributions}}
In this section we study the Log-Cholesky mean of a random SPD matrix and the Log-Cholesky average of a finite number of SPD matrices. We first establish the existence and uniqueness of such quantities. A closed and easy-to-compute form for Log-Cholesky averages is also derived. Finally, we show that determinants of Log-Cholesky averages are bounded by determinants of SPD matrices being averaged. This property suggests that Log-Cholesky averages are not subject to swelling effect.

\subsection{Log-Cholesky mean of a random SPD matrix}
For a random element $Q$ on a Riemannian manifold $\manifold$, we
define a function $F(x)=\expect d_{\manifold}^{2}(x,Q)$, where $d_{\manifold}$
denotes the geodesic distance function on $\manifold$, and $\expect$ denotes expectation of a random number. If $F(x)<\infty$
for some $x\in\manifold$, 
\[
x=\underset{z\in\manifold}{\arg\min}\,F(z),
\]
and $F(z)\geq F(x)$ for all $z\in\manifold$, then $x$ is called a Fr\'echet
mean of $Q$, denoted by $\expect Q$. In general, Fr\'echet mean
might not exist, and when it exists, it might not be unique; see \cite{Aftab2013}
for conditions of the existence and uniqueness of Fr\'echet means. However, for $\spd$ endowed with Log-Cholesky metric, we claim that, \linremark{
if $F(S)<\infty$ for some $S\in\spd$, then the Fr\'echet mean exists and is unique}. Such Fr\'echet mean is termed \emph{Log-Cholesky mean} in this paper. To prove the claim, we first
notice that, similar to the Log-Euclidean metric \cite{Arsigny2007},
the manifold $(\spd,\gspd)$ is a ``flat'' space.
\begin{prop}
\label{thm:sectional-curvature}The sectional curvature of $(\cholspace,\gchol)$
and $(\spd,\gspd)$ is constantly zero.
\end{prop}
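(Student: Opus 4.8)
The plan is to read the result off the global chart $x$ already constructed in the proof of \cref{prop:geodesic-chol}. In that chart, with coordinate frame $\{\partial_1,\ldots,\partial_{m(m+1)/2}\}$ where $\partial_i=\mathrm{vec}(L)_ie_i$ for $i\le m$ and $\partial_i=e_i$ for $i>m$, it was shown that the metric components are $\gchol_{ij}=\gchol_L(\partial_i,\partial_j)=\delta_{ij}$; that is, they are constant over all of $\cholspace$. Conceptually this already says that $x$ identifies $(\cholspace,\gchol)$ isometrically with Euclidean space $\real^{m(m+1)/2}$ carrying its standard flat metric, which forces the sectional curvature to vanish. I would nonetheless spell this out through the curvature tensor, so as to reuse the computation already in hand.

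Concretely, the proof of \cref{prop:geodesic-chol} established that, because $\partial\gchol_{jk}/\partial x^l=0$, every Christoffel symbol vanishes: $\Gamma_{\:kl}^{i}=0$ on $\cholspace$. The next step is to recall that the components of the Riemann curvature tensor in a coordinate frame are
\[
R_{\:jkl}^{i}=\partial_{k}\Gamma_{\:lj}^{i}-\partial_{l}\Gamma_{\:kj}^{i}+\Gamma_{\:kp}^{i}\Gamma_{\:lj}^{p}-\Gamma_{\:lp}^{i}\Gamma_{\:kj}^{p},
\]
a polynomial in the Christoffel symbols and their first partial derivatives. Since every $\Gamma_{\:kl}^{i}$ vanishes identically, each term on the right-hand side is zero, so $R_{\:jkl}^{i}\equiv0$ and the Riemann curvature tensor is identically zero on $\cholspace$. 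Because the sectional curvature of the plane spanned by linearly independent tangent vectors $u,v$ has numerator $\gchol(R(u,v)v,u)$ and a nonzero denominator, it vanishes for every such plane. Hence $(\cholspace,\gchol)$ has sectional curvature constantly zero.

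Finally, for $(\spd,\gspd)$ I would invoke the fact established in \cref{subsec:spd-metric} that $\chtospd$ is a Riemannian isometry between $(\cholspace,\gchol)$ and $(\spd,\gspd)$. Sectional curvature is preserved under isometries, so it is constantly zero on $(\spd,\gspd)$ as well.

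I do not expect a genuine obstacle: the substantive work, namely exhibiting a chart in which the metric has constant coefficients, was already carried out in the proof of \cref{prop:geodesic-chol}. The only point meriting a line of care is confirming that $\{\partial_i\}$ is the coordinate frame of the chart $x$, so that vanishing Christoffel symbols have their usual consequence for the curvature tensor rather than merely being those of an orthonormal frame; this is immediate from the definitions $\partial_i=\mathrm{vec}(L)_ie_i$ and $\partial_i=e_i$, which are exactly $\partial/\partial x^i$ under the coordinate map $x$.
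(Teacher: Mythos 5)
Your proof is correct and follows essentially the same route as the paper: both read off the vanishing Christoffel symbols in the chart constructed in the proof of \cref{prop:geodesic-chol}, conclude that the curvature tensor is identically zero, and transfer the result to $(\spd,\gspd)$ via the isometry $\chtospd$. The only difference is that you spell out the coordinate formula for $R_{\:jkl}^{i}$ and verify that $\{\partial_i\}$ is genuinely the coordinate frame of the chart $x$, details the paper leaves implicit.
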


\begin{proof}
For $(\cholspace,\gchol)$, in the proof of \cref{prop:geodesic-chol},
it has been shown that all Christoffel symbols are zero under the
selected coordinate. This implies that the Riemannian curvature tensor
is identically zero and hence so is the sectional curvature. The case
of $(\spd,\gspd)$ follows from the fact that $\chtospd$ is an isometry that preserves sectional curvature.
\end{proof}
\begin{prop}
\label{thm:frechet-mean}If $L$ is a random element on $(\cholspace,\gchol)$
and $\expect d_{\cholspace}^{2}(A,L)<\infty$ for some $A\in\cholspace$.
Then the Fr\'echet mean $\expect L$ exists and is unique. Similarly, the Log-Cholesky mean of a random SPD matrix $P$ exists and is unique if $\expect d_{\spd}^{2}(S,P)<\infty$ for some SPD matrix $S$.
\end{prop}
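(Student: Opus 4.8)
The plan is to bypass any general Riemannian machinery and reduce the statement to the elementary Euclidean case, exploiting the explicit distance formula on $(\cholspace,\gchol)$ recorded in \cref{tab:mfd-chol}. Recall the coordinate map $x:\cholspace\rightarrow\real^{m(m+1)/2}$ from the proof of \cref{prop:geodesic-chol}, sending $L$ to the vector whose first $m$ entries are $\log L_{jj}$ and whose remaining entries are the strictly lower triangular entries $L_{ij}$ with $i>j$. First I would observe that $x$ is a bijection onto all of $\real^{m(m+1)/2}$, since the diagonal entries range over the positive reals (so their logarithms cover $\real$) while the off-diagonal entries are unconstrained, and moreover that $x$ is a \emph{global metric isometry}: directly from
\[
d_{\cholspace}(L,K)=\{\fronorm[2]{\lp L-\lp K}+\fronorm[2]{\log\dg L-\log\dg K}\}^{1/2}
\]
one reads off $d_{\cholspace}(L,K)=\|x(L)-x(K)\|$, the Euclidean norm. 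This is the single conceptual step; everything downstream is routine.

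Next I would transport the Fr\'echet functional through $x$. Writing $V=x(L)$ for the induced random vector and $z=x(A)$, the objective is $F(A)=\expect d_{\cholspace}^{2}(A,L)=\expect\|z-V\|^{2}$, and minimizing over $A\in\cholspace$ coincides with minimizing over $z\in\real^{m(m+1)/2}$ because $x$ is onto. The hypothesis $\expect d_{\cholspace}^{2}(A_{0},L)<\infty$ for some $A_{0}$ is exactly $\expect\|z_{0}-V\|^{2}<\infty$ with $z_{0}=x(A_{0})$; from $\|V\|^{2}\leq 2\|z_{0}-V\|^{2}+2\|z_{0}\|^{2}$ it follows that $\expect\|V\|^{2}<\infty$, so $\expect V$ exists. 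Expanding $\expect\|z-V\|^{2}=\|z\|^{2}-2\innerprod{z}{\expect V}+\expect\|V\|^{2}$ shows the functional is finite, strictly convex with Hessian $2I$, and minimized uniquely at $z^{\ast}=\expect V$. Pulling back through the bijection $x$, the Fr\'echet mean $\expect L$ exists, equals the explicit point $x^{-1}(\expect x(L))$, and is unique because $z^{\ast}$ is. This settles the first assertion.

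Finally, for the SPD statement I would invoke that $\chtospd$ is a Riemannian isometry between $(\cholspace,\gchol)$ and $(\spd,\gspd)$, whence $d_{\spd}(P,Q)=d_{\cholspace}(\ch P,\ch Q)$. For a random SPD matrix $P$ the Cholesky factor $L=\ch P$ is a random element of $\cholspace$, and $\expect d_{\spd}^{2}(S,P)<\infty$ becomes $\expect d_{\cholspace}^{2}(\ch S,L)<\infty$; the first part then produces a unique Fr\'echet mean $\expect L$, and $\chtospd(\expect L)$ is the unique Log-Cholesky mean of $P$, since an isometry carries the minimizer of the one Fr\'echet functional bijectively to that of the other. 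I do not expect a genuine obstacle here: the only point deserving care is confirming that $x$ is a \emph{global} isometry onto flat Euclidean space (not merely a chart with vanishing Christoffel symbols, as established in \cref{prop:geodesic-chol}), after which the result is just the standard existence and uniqueness of the Euclidean mean under a finite second-moment condition.
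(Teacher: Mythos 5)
Your proposal is correct, but it takes a genuinely different route from the paper. The paper proves this proposition by verifying that $\cholspace$ is simply connected (via the diffeomorphism $\psi(L)=\lp L+\log\dg L$ onto $\lowtri$) and has zero sectional curvature (\cref{thm:sectional-curvature}), and then citing Theorem 2.1 of Bhattacharya and Patrangenaru --- a Cartan--Hadamard-type existence/uniqueness theorem for Fr\'echet means; the SPD case follows from the isometry $\chtospd$, exactly as in your last step. You instead upgrade the chart $x$ (which is $\psi$ up to vectorization) to a \emph{global metric isometry} onto flat $\real^{m(m+1)/2}$ --- legitimate either directly from the distance formula in \cref{tab:mfd-chol} or, more primitively, from the computation $\gchol_{ij}=\delta_{ij}$ in the proof of \cref{prop:geodesic-chol} --- and then solve the Euclidean problem by hand: under a finite second moment, the functional $z\mapsto\expect\|z-V\|^{2}$ is a strictly convex quadratic with unique minimizer $\expect V$. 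What your route buys: it is self-contained (no external theorem); it automatically supplies the completeness hypothesis that Cartan--Hadamard-type results require and that the paper never explicitly checks; and it yields the closed form $\expect L=x^{-1}(\expect x(L))$ for free, which is precisely \cref{eq:E-L} of \cref{thm:EL-EP}, so you have in effect proved the paper's next proposition simultaneously (the paper's own proof of \cref{thm:EL-EP} argues in the same coordinate-wise spirit as yours). What the paper's route buys: brevity, and an argument template that would survive on a complete simply connected manifold of nonpositive but nonzero curvature, where your exact reduction to a Euclidean mean is unavailable.
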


\begin{proof}
In the case of $(\cholspace,\gchol)$, we define $\psi(L)=\lp L+\log\dg L$.
It can be shown that $\psi$ is a diffeomorphism (and hence also a
homeomorphism) between $\cholspace$ and $\lowtri$. Therefore, $\cholspace$
is simply connected since $\lowtri$ is. In \cref{thm:sectional-curvature}
we have shown that $\cholspace$ has zero sectional curvature. Thus,
the existence and uniqueness of Fr\'echet mean follows from Theorem
2.1 of \cite{Bhattacharya2003}. The case of $(\spd,\gspd)$ follows
from the isometry of $\chtospd$.
\end{proof}

The next result shows that the Log-Cholesky mean of a random SPD matrix is computable from the Fr\'echet mean of its Cholesky factor. Also, it characterizes the Fr\'echet mean of a random Cholesky factor, which is important for us to derive a closed form for the Log-Cholesky average of a finite number of matrices in the next subsection.
\begin{prop}
\label{thm:EL-EP}Suppose $L$ is a random element on $\cholspace$
and $P$ is a random element on $\spd$. Suppose for some fixed $A\in\cholspace$
and $B\in\spd$ such that $\expect d_{\cholspace}^{2}(A,L)<\infty$
and $\expect d_{\spd}^{2}(B,P)<\infty$. Then the Fr\'echet mean
of $L$ is given by 
\begin{equation}
\expect L=\expect\lp{L}+\exp\left\{ \expect\log\dg L\right\} ,\label{eq:E-L}
\end{equation}
and the Log-Cholesky mean of $P$ is given by
\begin{equation}
\expect P=\{\expect\ch P\}\{\expect\ch P\}^{\transpose}.\label{eq:E-S}
\end{equation}
\end{prop}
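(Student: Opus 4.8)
The plan is to prove both formulas by reducing everything to the Fr\'echet mean on $(\cholspace,\gchol)$ and exploiting the product structure of its metric. First I would observe that, since $\cholspace$ is simply connected with zero sectional curvature (by \cref{thm:sectional-curvature} and the proof of \cref{thm:frechet-mean}), the Fr\'echet mean $\expect L$ exists, is unique, and is characterized as the unique critical point of $F(A)=\expect d_{\cholspace}^{2}(A,L)$. Using the explicit distance formula
\[
d_{\cholspace}^{2}(A,L)=\fronorm[2]{\lp A-\lp L}+\fronorm[2]{\log\dg A-\log\dg L},
\]
the objective $F$ splits additively into a contribution from the strictly-lower part $\lp A$ and a contribution from the log-diagonal part $\log\dg A$. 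These two groups of coordinates are decoupled, so minimizing $F$ reduces to minimizing each piece separately.

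The key step is then to identify each piece with a Euclidean least-squares problem. For the strictly-lower part, $\lp{\lowtri}$ carries the ordinary Frobenius inner product, so $\expect\fronorm[2]{\lp A-\lp L}$ is minimized at the Euclidean expectation $\lp A=\expect\lp L$. For the diagonal part, the substitution $D=\log\dg A$ turns $\expect\fronorm[2]{D-\log\dg L}$ into another Euclidean least-squares problem in $D$, minimized at $D=\expect\log\dg L$, i.e. $\log\dg A=\expect\log\dg L$, equivalently $\dg A=\exp\{\expect\log\dg L\}$. Assembling the two parts via $A=\lp A+\dg A$ gives exactly
\[
\expect L=\expect\lp L+\exp\{\expect\log\dg L\},
\]
which is \cref{eq:E-L}. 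I would note in passing that the finiteness hypothesis $\expect d_{\cholspace}^{2}(A,L)<\infty$ guarantees that both $\expect\lp L$ and $\expect\log\dg L$ are well-defined (finite), so the minimizers exist.

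For the second formula I would invoke the isometry established earlier: $\chtospd$ is a Riemannian isometry between $(\cholspace,\gchol)$ and $(\spd,\gspd)$, and isometries carry Fr\'echet means to Fr\'echet means. Writing $L=\ch P$ (so that $P=\chtospd(L)=LL^{\transpose}$ and $L$ is a random element of $\cholspace$ with $\expect d_{\cholspace}^{2}<\infty$ by $d_{\spd}(P,B)=d_{\cholspace}(\ch P,\ch B)$), uniqueness of the Log-Cholesky mean from \cref{thm:frechet-mean} lets me conclude
\[
\expect P=\chtospd(\expect L)=(\expect L)(\expect L)^{\transpose}=\{\expect\ch P\}\{\expect\ch P\}^{\transpose},
\]
which is \cref{eq:E-S}. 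The main obstacle, and the only place requiring genuine care rather than routine calculation, is justifying that the minimizer of the expected squared distance may be computed coordinatewise by interchanging expectation and the splitting of $F$ into its lower and diagonal parts; once the additive decomposition of $d_{\cholspace}^{2}$ is in hand and the finiteness assumption is used to legitimize passing to the (finite) expectations, the two scalar/Euclidean minimizations are immediate, and the isometry transfer handles the SPD case with no further work.
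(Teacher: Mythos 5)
Your proposal is correct and follows essentially the same route as the paper's own proof: split $F$ additively via the explicit distance formula into the strictly-lower and diagonal parts, minimize each as a Euclidean least-squares problem (with the $\log$ substitution handling the positive diagonal), and transfer \cref{eq:E-L} to \cref{eq:E-S} via the isometry $\chtospd$. The only cosmetic difference is that the paper further decomposes the diagonal term into $m$ scalar problems $F_{2j}$, whereas you treat it as a single vector-valued least-squares problem after the substitution $D=\log\dg A$; these are equivalent.
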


\begin{proof}
Let $F(R)=\expect d_{\cholspace}^{2}(R,L)$. Then
$$F(R)=\expect\fronorm[2]{\lp R-\lp L}+\expect\fronorm[2]{\log\dg R-\log\dg L}\define F_{1}(\lp{R})+F_{2}(\dg{R}).$$ To
minimize $F$, we can separately minimize $F_{1}$ and $F_{2}$, due to
two reasons: 1) $F_{1}$ involves $\lp R$ only while $F_{2}$ involves
$\dg R$, and 2) $\lp R$ and $\dg R$ are disjoint and can vary independently.
For $F_{1}$, we first note that the condition $\expect d_{\cholspace}^{2}(A,L)<\infty$
ensures that $\expect\fronorm[2]{\lp L}<\infty$ and hence the mean $\expect\lp L$
is well defined. Also, it can be checked that this mean minimizes $F_{1}$. 

For $F_{2}$, we note that 
\begin{align*}
F_{2}(\dg{R}) & = \sum_{j=1}^{m}\expect(\log\dg R-\log\dg L)_{jj}^{2}\\
& =\sum_{j=1}^{m}\expect\{\log R_{jj}-\log L_{jj}\}^{2} \define\sum_{j=1}^{m}F_{2j}(R_{jj}).
\end{align*}
Again, these $m$ components $F_{2j}$ can be optimized separately.
From the condition $\expect d_{\cholspace}^{2}(A,L)<\infty$ we deduce that $\expect(\log L_{jj})^{2}<\infty$.
Thus, $\expect\log L_{jj}$ is well defined and minimizes $F_{2j}(e^{x})$ with respect to $x$. Therefore, $\exp\left\{ \expect\log L_{jj}\right\} $
minimizes $F_{2j}(x)$ for each $j=1,\ldots,m$. In matrix form,
this is equivalent to that $\exp\left\{ \expect\log\dg L\right\} $
minimizes $F_{2}(D)$ when $D$ is constrained to be diagonal. Thus,
combined with the optimizer for $F_{1}$, it establishes \cref{eq:E-L}. Then
\cref{eq:E-S} follows from the fact that $\chtospd(L)=LL^{\transpose}$
is an isometry. 
\end{proof}

Finally, we establish the following useful relation between the determinant of the Log-Cholesky mean and the mean of the logarithmic determinant of a random SPD matrix.
\begin{prop}
\label{prop:det}If the Fr\'echet mean of a random element $Q$ on
$(\cholspace,\gchol)$ or $(\spd,\gspd)$ exists, then $\log\det(\expect Q)=\expect\log(\det Q)$.
\end{prop}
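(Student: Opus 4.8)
The plan is to treat the two cases separately, establishing the statement for a random element $L$ on $(\cholspace,\gchol)$ first and then reducing the SPD case to it through the isometry $\chtospd$. For the Cholesky case I would start from the closed form of the Fr\'echet mean furnished by \cref{eq:E-L}, namely $\expect L=\expect\lp{L}+\exp\{\expect\log\dg L\}$. The key structural observation is that $\expect L$ is again lower triangular, with strictly lower triangular part $\expect\lp{L}$ and diagonal part exactly $\exp\{\expect\log\dg L\}$, since $\lp L$ contributes nothing on the diagonal. Consequently the $(j,j)$ entry of $\expect L$ equals $\exp\{\expect\log L_{jj}\}$.

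Next I would invoke the elementary identity $\det(X)=\prod_{j=1}^m X_{jj}$ for lower triangular $X$, recalled in \cref{subsec:lower}. Applying it to $\expect L$ gives $\det(\expect L)=\prod_{j=1}^m\exp\{\expect\log L_{jj}\}=\exp\{\sum_{j=1}^m\expect\log L_{jj}\}$. Taking logarithms and using linearity of expectation, this becomes $\log\det(\expect L)=\sum_{j=1}^m\expect\log L_{jj}=\expect\sum_{j=1}^m\log L_{jj}=\expect\log\det L$, where the last equality again uses the product formula for the determinant of a lower triangular matrix. This disposes of the Cholesky case.

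For the SPD case I would use \cref{eq:E-S}, which writes $\expect P=\{\expect\ch P\}\{\expect\ch P\}^{\transpose}$, so that $\det(\expect P)=\det(\expect\ch P)^2$ and hence $\log\det(\expect P)=2\log\det(\expect\ch P)$. Since the random Cholesky factor $\ch P$ is itself a random element of $\cholspace$, the already-established Cholesky case yields $\log\det(\expect\ch P)=\expect\log\det(\ch P)$. Finally, from $P=\ch P\,\ch P^{\transpose}$ we get $\det P=\det(\ch P)^2$, hence $\log\det(\ch P)=\half\log\det P$; substituting gives $\log\det(\expect P)=2\expect\log\det(\ch P)=\expect\log\det P$, as claimed.

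The computation is essentially routine, so I do not anticipate a genuine obstacle; the only point requiring care is the well-definedness of the scalar expectations $\expect\log L_{jj}$ used throughout. This is guaranteed by the hypothesis: the existence of the Fr\'echet mean carries with it the moment condition (made explicit in the proof of \cref{thm:EL-EP}) that $\expect(\log L_{jj})^2<\infty$ for each $j$, which ensures that every expectation above is finite and that interchanging expectation with the finite sum over $j$ is legitimate.
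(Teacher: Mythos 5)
Your proposal is correct and follows essentially the same route as the paper's own proof: both cases rest on the closed forms \cref{eq:E-L} and \cref{eq:E-S} from \cref{thm:EL-EP}, the product formula for determinants of triangular matrices, and linearity of expectation, with the SPD case reduced to the Cholesky case via multiplicativity of the determinant. The only differences are notational (you write explicit sums $\sum_j \log L_{jj}$ where the paper uses $\trace\{\log\dg L\}$, and you take logarithms early where the paper manipulates exponentials), and your explicit remark that existence of the Fr\'echet mean supplies the moment condition $\expect(\log L_{jj})^2<\infty$ is a point the paper leaves implicit.
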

\begin{proof}
For the case that $Q$ is a random element on $(\cholspace,\gchol)$,
we denote it by $L$ and observe that in \cref{eq:E-L} $\expect L$
is a lower triangular matrix and $\exp\left\{ \expect\log\dg L\right\} $
is its diagonal part. For a triangular matrix, its determinant is
the product of diagonal elements. Thus, $$\det L=\prod_{j=1}^{m}L_{jj}=\exp[\trace\{\log\dg L\}],$$
or equivalently, $\log\det L=\trace\{\log\dg L\}$. The above observations
imply that
\begin{align*}
\det\mathbb{E}L & =\exp\left[\trace\left\{ \log\dg{\expect L}\right\} \right]=\exp\left[\trace\left\{ \expect\log\dg L\right\} \right]\\
 & =\exp\left[\expect\trace\left\{ \log\dg L\right\} \right]=\exp\left\{ \expect\log(\det L)\right\} ,
\end{align*}
which proves the case of $(\cholspace,\gchol)$. 

For the case that $Q$ is a random element on $(\spd,\gspd)$, we
denote it by $P$ and observe that $\det(AB)=(\det A)(\det B)$ for
any square matrices $A$ and $B$. Let $L=\ch P$. Then, one has
\begin{align*}
\det\expect P & =\det\{(\expect L)(\expect L)^{\transpose}\}\\
 & =\{\det(\expect L)\}\{\det(\expect L)^{\transpose}\}\\
 & =\exp\left\{ \expect\log(\det L)\right\} \exp\left\{ \expect\log(\det L^{\transpose})\right\} \\
 & =\exp\left\{ \expect(\log\det L+\log\det L^{\transpose})\right\} \\
 & =\exp\left[\expect\log\{(\det L)\det(L^{\transpose})\}\right],\\
 & =\exp\left\{ \expect\log\det(LL^{\transpose})\right\}\\
 & =\exp\left\{ \expect\log\det(P)\right\},
\end{align*}
which establishes the statement for the case of $(\spd,\gspd)$.
\end{proof}

\subsection{Log-Cholesky average of finite SPD matrices}

Let $Q_{1},\ldots,Q_{n}$ be $n$ points on a Riemannian manifold
$\manifold$. The Fr\'echet average of these points, denoted by $\expect_{n}(Q_{1},\ldots,Q_{n})$,
 is defined to be the minimizer of function $F_{n}(x)=\sum_{i=1}^{n}d_{\manifold}^{2}(x,Q_{i})$ if such minimizer exists and is unique.
Clearly, this concept is analogous to the Fr\'echet mean of a random
element discussed in the above. In fact, the set of the elements $Q_{1},\ldots,Q_{n}$, always corresponds to a random element $Q$ with the uniform
distribution on the set $\{Q_{1},\ldots,Q_{n}\}$. With this correspondence,
the Fr\'echet average of $Q_{1},\ldots,Q_{n}$ is simply the Fr\'echet
mean of $Q$, i.e., $\expect_{n}(Q_{1},\ldots,Q_{n})=\expect Q$.
The following result is a corollary of \cref{thm:EL-EP}
in conjunction with the above correspondence.
\begin{cor}
For $L_{1},\ldots,L_{n}\in\cholspace$, one has 
\begin{equation}
\mathbb{E}_{n}(L_{1},\ldots,L_{n})=\frac{1}{n}\sum_{i=1}^{n}\lp{L_{i}}+\exp\left\{ n^{-1}\sum_{i=1}^{n}\log\dg{L_{i}}\right\} ,\label{eq:En-L}
\end{equation}
and for $P_{1},\ldots,P_{n}\in\spd$, one has
\begin{equation}
\mathbb{E}_{n}(P_{1},\ldots,P_{n})=\mathbb{E}_{n}\{\ch{P_{1}},\ldots,\ch{P_{n}}\}\mathbb{E}_{n}\{\ch{P_{1}},\ldots,\ch{P_{n}}\}^{\transpose}.\label{eq:En-S}
\end{equation}
\end{cor}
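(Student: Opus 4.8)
The plan is to derive this corollary as a direct specialization of \cref{thm:EL-EP} via the correspondence between a finite point set and a uniform random element, exactly as set up in the paragraph preceding the statement. First I would let $Q$ denote the random element taking each value in $\{L_1,\ldots,L_n\}$ (respectively $\{P_1,\ldots,P_n\}$) with probability $1/n$. Under this uniform distribution, expectation becomes the arithmetic average: $\expect\lp{L}=\frac{1}{n}\sum_{i=1}^n\lp{L_i}$ and $\expect\log\dg{L}=\frac{1}{n}\sum_{i=1}^n\log\dg{L_i}$. Since $F_n(x)=\sum_{i=1}^n d_{\cholspace}^2(x,Q_i)=n\,\expect d_{\cholspace}^2(x,Q)$, minimizing $F_n$ is equivalent to minimizing the Fr\'echet functional for $Q$, so the Fr\'echet average equals the Fr\'echet mean $\expect Q$.

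Next I would verify that the hypotheses of \cref{thm:EL-EP} hold in this finite setting. The integrability condition $\expect d_{\cholspace}^2(A,L)<\infty$ is automatic for a uniform distribution on finitely many points, since the expectation is a finite sum of finite distances; one may take $A=L_1$ (respectively $B=P_1$). With the hypotheses satisfied, \cref{eq:E-L} of \cref{thm:EL-EP} yields $\expect L=\expect\lp{L}+\exp\{\expect\log\dg{L}\}$, and substituting the averaged expectations computed above gives precisely \cref{eq:En-L}. Likewise, \cref{eq:E-S} gives $\expect P=\{\expect\ch{P}\}\{\expect\ch{P}\}^{\transpose}$, and since $\expect\ch{P}=\mathbb{E}_n\{\ch{P_1},\ldots,\ch{P_n}\}$ under the uniform correspondence, this is exactly \cref{eq:En-S}.

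There is essentially no obstacle here: the corollary is a routine translation, and the only point requiring a word of care is confirming that the minimizer of $F_n$ genuinely coincides with $\expect Q$ rather than merely sharing the same objective up to the factor $n$, which follows because scaling the objective by the positive constant $n$ does not change its argmin. I would state the argument in two or three sentences, invoking \cref{thm:EL-EP} for both the existence/uniqueness (guaranteed by \cref{thm:frechet-mean} since the finite-support integrability condition holds) and the closed-form expressions, and then read off \cref{eq:En-L} and \cref{eq:En-S} by specializing expectations to arithmetic means.
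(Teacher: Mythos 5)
Your proposal is correct and follows exactly the paper's own route: the paper derives this corollary by invoking \cref{thm:EL-EP} together with the correspondence between the finite set $\{Q_1,\ldots,Q_n\}$ and a uniformly distributed random element $Q$, under which expectations become arithmetic averages. Your additional checks (finite-support integrability and invariance of the argmin under scaling of $F_n$ by $n$) are sound and simply make explicit what the paper leaves implicit.
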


For a general manifold, Fr\'echet averages often do not admit a closed form. For example, no closed form of affine-invariant averages has been found. Strikingly, Log-Euclidean
averages admit a simple and closed form which is attractive in applications.
The above corollary shows that, Log-Cholesky averages enjoy the same nice property
of their Log-Euclidean counterparts. 
The following is a consequence
of \cref{prop:det} and the aforementioned principle of
correspondence between a set of objects and a random object with the discrete
uniform probability distribution on them.
\begin{cor}\label{cor:det-En}
For $L_{1},\ldots,L_{n}\in\cholspace$ and $P_{1},\ldots,P_{n}\in\spd$,
one has
\begin{equation}
\det\mathbb{E}_{n}(L_{1},\ldots,L_{n})=\exp\left(\frac{1}{n}\sum_{i=1}^{n}\log\det L_{i}\right)\label{eq:det-En-L}
\end{equation}
and
\begin{equation}
\det\mathbb{E}_{n}(P_{1},\ldots,P_{n})=\exp\left(\frac{1}{n}\sum_{i=1}^{n}\log\det P_{i}\right).\label{eq:det-En-S}
\end{equation}
\end{cor}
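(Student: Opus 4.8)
The plan is to derive both identities directly from \cref{prop:det} together with the correspondence between a finite collection of points and a uniformly distributed random element, exactly as indicated in the sentence preceding the statement. First I would confirm that the Fréchet averages in question exist, so that \cref{prop:det} may legitimately be invoked. Given $L_{1},\ldots,L_{n}\in\cholspace$, associate to them the random element $L$ taking each value $L_{i}$ with probability $1/n$; then for any fixed $A\in\cholspace$ we have $\expect d_{\cholspace}^{2}(A,L)=n^{-1}\sum_{i=1}^{n}d_{\cholspace}^{2}(A,L_{i})<\infty$, being a finite sum of finite geodesic distances. Hence \cref{thm:frechet-mean} guarantees that $\expect L$ exists and, by the correspondence principle, coincides with $\expect_{n}(L_{1},\ldots,L_{n})$.

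With existence secured, the second step is a one-line substitution. \Cref{prop:det} gives $\log\det(\expect L)=\expect\log(\det L)$, and since $L$ is uniform on $\{L_{1},\ldots,L_{n}\}$ the right-hand side is $\expect\log(\det L)=n^{-1}\sum_{i=1}^{n}\log\det L_{i}$. Exponentiating both sides produces \cref{eq:det-En-L}. For the SPD case I would run the identical argument verbatim: the uniform random element $P$ on $\{P_{1},\ldots,P_{n}\}$ again satisfies the finiteness hypothesis, so its Log-Cholesky mean exists and equals $\expect_{n}(P_{1},\ldots,P_{n})$; applying the $(\spd,\gspd)$ form of \cref{prop:det} and computing the uniform expectation of $\log\det P$ yields \cref{eq:det-En-S}.

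I do not anticipate any genuine obstacle, since all the analytic substance---the identity $\log\det(\expect Q)=\expect\log\det Q$---is already established in \cref{prop:det}. The only point deserving care is the preliminary verification that the averages exist before \cref{prop:det} is applied; once finiteness of $\expect d^{2}$ under the uniform measure is observed, each of the two claimed formulas reduces to a single elementary evaluation of a uniform expectation.
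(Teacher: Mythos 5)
Your proposal is correct and follows exactly the route the paper itself takes: invoking \cref{prop:det} together with the correspondence between the points $Q_1,\ldots,Q_n$ and a uniformly distributed random element, then evaluating the uniform expectation. Your explicit verification that $\expect d^{2}$ is finite (so that the Fr\'echet average exists via \cref{thm:frechet-mean}) is a detail the paper leaves implicit, but it is a welcome addition rather than a deviation.
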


The equation \cref{eq:det-En-S} shows that the determinant of the Log-Cholesky average of $n$ SPD matrices is the geometric mean of their determinants. \linremark{Consequently, the Log-Cholesky average can be considered as a generalization of the geometric mean of SPD matrices \cite{Ando2004}, since according to \cite{Arsigny2007}, this property is ``the common property that should have all generalizations of the geometric mean to SPD matrices''. Note that the Log-Cholesky average is also a Fr\'echet mean which is a generalization of the Euclidean mean and has applications in statistics on Riemannian manifolds \cite{Pennec2006b}, while the geometric mean \cite{Ando2004} is algebraically  constructed and not directly related to Riemannian geometry.} 

\Cref{cor:det-En} also suggests that the determinant of the Log-Cholesky average is equal to the determinant of  the Log-Euclidean and affine-invariant averages.  Thus, like these two averages, the Log-Cholesky average does not suffer from swelling effect. In fact, as a consequence of the above corollary, one has that 
\begin{equation}
\inf_{1\leq i\leq n}\det L_{i}\leq\det\mathbb{E}_{n}(L_{1},\ldots,L_{n})\leq\sup_{1\leq i\leq n}\det L_{i}\label{eq:En-range-L}
\end{equation}
and 
\begin{equation}
\inf_{1\leq i\leq n}\det P_{i}\leq\det\mathbb{E}_{n}(P_{1},\ldots,P_{n})\leq\sup_{1\leq i\leq n}\det P_{i}.\label{eq:En-range-S}
\end{equation}
To see so, we first observe that,
for positive numbers $x_{1},\ldots,x_{n}$, one has $$\inf_{1\leq i\leq m}\log x_{i}\leq n^{-1}\sum_{i=1}^{n}\log x_{i}\leq\sup_{1\leq i\leq m}\log x_{i}.$$
Then, the monotonicity of $\exp(x)$ implies that $$\inf_{1\leq i\leq m}x_{i}\leq\exp\left(n^{-1}\sum_{i=1}^{n}\log x_{i}\right)\leq\sup_{1\leq i\leq m}x_{i},$$
and both \cref{eq:En-range-L} and \cref{eq:En-range-S} follow from \cref{cor:det-En}.

As argued in \cite{Arsigny2007}, proper interpolation of SPD matrices
is of importance in diffusion tensor imaging. The analogy of linear
interpolation for Riemannian manifolds is geodesic interpolation.
Specifically, if $P$ and $Q$ are two points on a manifold and $\gamma(t)$
is a geodesic connecting them such that $\gamma(0)=P$ and $\gamma(1)=Q$,
then we say that $\gamma$ geodesically interpolates $P$ and $Q$.
This notion of linear interpolation via geodesics can be straightforwardly
generalized to bilinear or higher dimensional linear interpolation
of points on a manifold. In \cref{fig:swelling-effect}, we
present an illustration of such geodesic interpolation for SPD matrices
under the Euclidean metric,  Cholesky distance \cite{Dryden2009}, affine-invariant metric, Log-Euclidean metric
and Log-Cholesky metric. The Euclidean case exhibits
significant swelling effect. Comparing to the Euclidean case, the Cholesky
distance \cite{Dryden2009} substantially alleviates the effect, but still suffers from noticeable swelling effect. In contrast,
the Log-Cholesky metric, as well as the affine-invariant metric and Log-Euclidean metric, is not subject to any swelling effect. \linremark{In addition, the affine-invariant, Log-Euclidean and Log-Cholesky geodesic interpolations showed in  \cref{fig:swelling-effect} are visibly indifferent. In fact, numerical simulations confirm that these three metrics often yield a similar Fr\'echet average of SPD matrices. For example, for a set of 20 randomly generated SPD matrices of dimension $m=3$, the expected relative difference in terms of squared Frobenius norm between the Log-Cholesky average and the affine-invariant(or Log-Euclidean) average is approximately $3.3\times 10^{-2}$.}

The computation
of geodesic interpolation for the Log-Cholesky metric is as efficient
as the one for the Log-Euclidean metric, since both metrics enjoy
a simple closed form for the Fr\'echet average of finite SPD matrices.
Moreover, it is even numerically stabler than  the Log-Euclidean metric
which is in turn stabler than the affine-invariant metric. On synthetic
examples of $3\times 3$ SPD matrices with the largest eigenvalue $10^{10}$ (resp. $10^{15}$) times
larger than the smallest eigenvalue, the Log-Cholesky metric is still
stable, while the Log-Euclidean one starts to deteriorate (resp. numerically
collapse).

\begin{figure}
\begin{centering}
\includegraphics[width=0.8\textwidth]{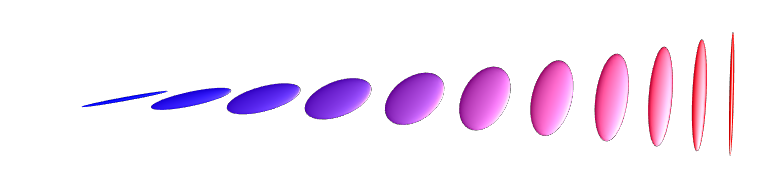}
\par\end{centering}
\begin{centering}
\includegraphics[width=0.8\textwidth]{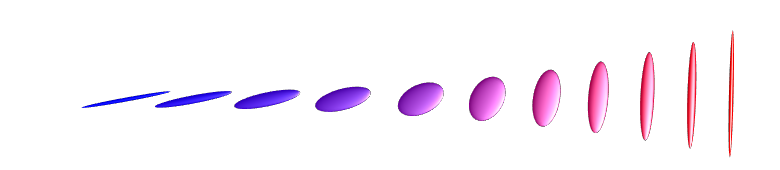}
\par\end{centering}
\begin{centering}
\includegraphics[width=0.8\textwidth]{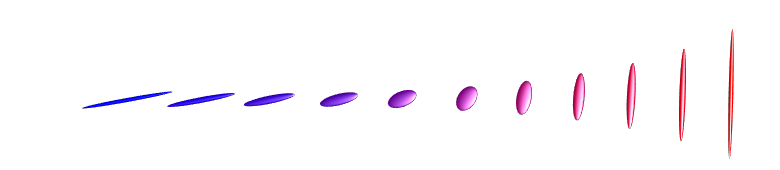}
\par\end{centering}
\begin{centering}
\includegraphics[width=0.8\textwidth]{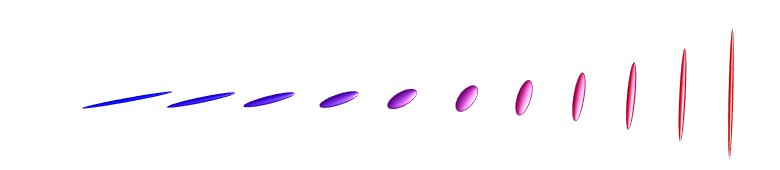}
\par\end{centering}
\begin{centering}
\includegraphics[width=0.8\textwidth]{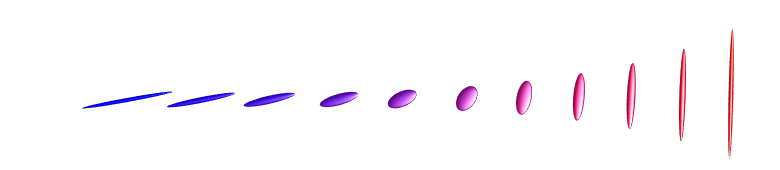}
\par\end{centering}
\caption{Interpolation of SPD matrices. Top: Euclidean linear interpolation.
The associated determinants are 5.40, 17.92, 27.68, 34.69, 38.93, 40.41,
39.14, 35.11, 28.32, 18.77, 6.46. Clearly, Euclidean interpolation
exhibits significant swelling effect. Second row: Cholesky interpolation.
5.40, 9.31, 13.12, 16.29, 18.43, 19.30, 18.80, 17.00, 14.08, 10.40,
6.46. The swelling effect in this case is reduced comparing to the
Euclidean interpolation. Third row: affine-invariant interpolation. Fourth row: Log-Euclidean interpolation. Bottom: Log-Cholesky geometric interpolation.
The associated determinants for the last three interpolations are the same: 5.40, 5.50, 5.60, 5.70, 5.80, 5.91,
6.01, 6.12, 6.23, 6.34, 6.46. There is no swelling effect observed for affine-invariant, Log-Euclidean and Log-Cholesky interpolation.}
\label{fig:swelling-effect}

\end{figure}

\section{Concluding remark\label{sec:Conclusion}}

We have constructed a new Lie group structure on SPD matrices via
Cholesky decomposition and a bi-invariant metric on it, termed Log-Cholesky
metric. Such structure and metric have the advantages of the Log-Euclidean
metric and affine-invariant metric. In addition, it has a simple
and closed form for Fr\'echet averages and parallel transport along
geodesics. For all of these metrics, Fr\'echet averages have the
same determinant and do not have swelling effect to which both
the Euclidean metric and the classic Cholesky distance are subject. Computationally,
it is much faster than its two counterparts, the  Log-Euclidean metric
and the affine-invariant metric. For computation of parallel transport,
it could be approximately 45 times faster than the Log-Euclidean metric
and 4 times faster than the affine-invariant one. The Log-Cholesky metric is also numerically stabler than these two metrics. 

In practice, which metric to
choose may depend on the context of applications while the presented
Log-Cholesky metric offers a choice alternative to existing metrics 
like the Log-Euclidean metric and the affine-invariant metric. For
big datasets, the advantage of the Log-Cholesky metric in computation is attractive. \linremark{However, for applications like \cite{Barachant2012} to which the congruence invariance property is central, the affine-invariant metric is recommended, since numerical experiments suggest that both the Log-Euclidean and Log-Cholesky metrics do not have the congruence invariance property that is enjoyed by the affine-invariant metric.}  

One shall also note that the Log-Cholesky metric
can be equivalently formulated in terms of upper triangular matrices.
\linremark{In the future, we plan to investigate other properties of Log-Cholesky means, e.g., their anisotropy and  relation to other geometric means or Fr\'echet means. We also plan to compare the performance of various metrics
in the study of brain functional connectivities
which are often characterized by SPD matrices.}

\section*{Acknowledgment}The author thanks Dr. Peng Ding and Dr. Qiang Sun for providing a potential application of the proposed Log-Cholesky framework when the latter approached the author for a geometric interpretation of their statistical model.

\bibliographystyle{siamplain}
\bibliography{manifold,misc}

\end{document}